\newtheorem{theorem}{Theorem}[section]
\newtheorem{lemma}[theorem]{Lemma}
\newtheorem{conjecture}[theorem]{Conjecture}
\theoremstyle{definition}
\theoremstyle{remark}
\newtheorem{remark}[theorem]{Remark}
\numberwithin{equation}{section}
\DeclareMathOperator{\vol}{vol}
\newcommand{\dist}{\mathop{\rm dist}}
\renewcommand{\epsilon}{\varepsilon}
\renewcommand{\phi}{\varphi}
\renewcommand{\kappa}{\varkappa}
\begin{document}

\title{Optimality of codes with respect to error probability in Gaussian noise}

\author{Alexey~Balitskiy{$^{\spadesuit}$}}

\email{alexey\_m39@mail.ru}

\author{Roman~Karasev{$^{\clubsuit}$}}

\email{r\_n\_karasev@mail.ru}
\urladdr{http://www.rkarasev.ru/en/}

\author{Alexander~Tsigler{$^{\diamondsuit}$}}

\email{sasha-cigler@mail.ru}

\thanks{{$^{\spadesuit}$}{$^{\clubsuit}$}{$^{\diamondsuit}$}Supported by the Russian Foundation for Basic Research grant 15-31-20403 (mol\_a\_ved)}

\thanks{{$^{\spadesuit}$}{$^{\clubsuit}$}{$^{\diamondsuit}$}Supported by the Russian Foundation for Basic Research grant 15-01-99563 A}

\address{{$^{\spadesuit}$}{$^{\clubsuit}$}{$^{\diamondsuit}$}Moscow Institute of Physics and Technology, Institutskiy per. 9, Dolgoprudny, Russia 141700}

\address{{$^{\spadesuit}$}{$^{\clubsuit}$}{$^{\diamondsuit}$}Institute for Information Transmission Problems RAS, Bolshoy Karetny per. 19, Moscow, Russia 127994}

\begin{abstract}
We consider geometrical optimization problems related to optimizing the error probability in the presence of a Gaussian noise. One famous questions in the field is the ``weak simplex conjecture''. We discuss possible approaches to it, and state related conjectures about the Gaussian measure, in particular, the conjecture about minimizing of the Gaussian measure of a simplex. We also consider antipodal codes, apply the \v{S}id\'ak inequality and establish some theoretical and some numerical results about their optimality.
\end{abstract}

\maketitle

\section{Introduction}

Assume a code is represented by a finite set of vectors $\{v_i\}$ in $\mathbb R^n$ and the decoding procedure is by taking the (Euclidean distance) closest point of $\{v_i\}$ (which is optimal subject to likelihood maximization).

If we want to calculate the probability that a vector is transmitted correctly in the presence of a normalized Gaussian noise then we obtain a value proportional to
\begin{equation}
\label{equation:funcional}
P(v_1,\ldots, v_N) = \int_{\mathbb R^n} \max_i e^{-|x - v_i|^2}\; dx.
\end{equation}

\begin{remark}
If the probability to choose any of $v_i$ in the code is the same then the actual probability of transmitting the signal correctly is the above value $P$ multiplied by a constant and divided by $N$. In most if this text the constant in front of the integral is not relevant, but if someone want to interpret the practical meaning of the data given in Section~\ref{section:antipodal-numerical} (where we allow $N$ to vary) then this factor has to be taken into account.
\end{remark}

\begin{remark} Here we normalize the exponent as $e^{-x^2}$ and do not use the leading factor $(\pi)^{-n/2}$ to shorten the formulas. Again, in the numerical results of Section~\ref{section:antipodal-numerical} we will use the more common normalization with density $(2\pi)^{-n/2} e^{-|x|^2/2}$.
\end{remark}

It was conjectured that in the case $N=n+1$ with fixed total energy $|v_1|^2+\dots + |v_N|^2$ the maximum of this functional is attained at regular simplices centered at the origin, see~\cite[Page~74]{cover1987}. In~\cite{dunbridge1965,dunbridge1967} it was shown that the regular simplex is optimal for energy tending to infinity and locally optimal for every energy. Eventually, this conjecture turned out to be false; in~\cite{stein1994} it was shown that for $m\ge 7$ the configuration with $m-2$ zero vectors and $2$ antipodal vectors is better than the regular simplex with $m$ vertices and the same energy. Now it is conjectured that

\begin{conjecture}[The simplex conjecture]
\label{conjecture:simplex}
For $N=n+1$ and fixed $|v_1|=\dots = |v_N|=r$ the maximum of $P(v_1,\ldots, v_N)$ is attained at any configuration forming a regular simplex inscribed into the ball of radius $r$.
\end{conjecture}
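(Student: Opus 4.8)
The plan is to convert $P$ into a purely Gaussian-geometric quantity and then bring Gaussian comparison inequalities to bear. Since $|v_i|^2=r^2$, one has $\max_i e^{-|x-v_i|^2}=e^{-|x|^2-r^2}\max_i e^{2\langle x,v_i\rangle}=e^{-|x|^2-r^2+2h_\Delta(x)}$, where $h_\Delta(x)=\max_i\langle x,v_i\rangle$ is the support function of $\Delta=\conv\{v_i\}$, so
\begin{equation*}
P=e^{-r^2}\int_{\mathbb R^n}e^{-|x|^2+2h_\Delta(x)}\,dx=e^{-r^2}\pi^{n/2}\,\mathbb E\,e^{2h_\Delta(Z)}
\end{equation*}
for a standard Gaussian vector $Z$; the conjecture becomes the assertion that, among simplices inscribed in the sphere of radius $r$, the regular one maximizes $\mathbb E\,e^{2h_\Delta(Z)}$. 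Equivalently, decoding by nearest point splits $\mathbb R^n$ into Voronoi cells, and since the origin is then the circumcenter of $\Delta$, squaring the inequalities $|x-v_i|\le|x-v_j|$ identifies the cell of $v_i$ with the normal cone $N_i$ of $\Delta$ at $v_i$; writing $\gamma(A)=\int_Ae^{-|x|^2}\,dx$ this gives $P=\sum_i\gamma(N_i-v_i)$, a sum of Gaussian measures of translated simplicial cones that tile $\mathbb R^n$.

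In this form $P$ depends on $\Delta$ only through the Gram matrix $G=(\langle v_i,v_j\rangle)$, which ranges over $G\succeq0$ with $G_{ii}=r^2$, $\operatorname{rank}G\le n$, and $\{v_i\}$ affinely independent; the regular simplex is the unique such configuration with all off-diagonal entries equal (necessarily to the minimal common value $-r^2/n$), and it is the natural candidate for the symmetric extremum. Heuristically $h_\Delta(Z)=\max_i\xi_i$ with $\xi=(\langle Z,v_i\rangle)_i$ centred Gaussian of covariance proportional to $G$, and the Price--Stein formula $\partial P/\partial G_{ij}\propto\mathbb E[\partial_i\partial_j(\max_k e^{2\xi_k})]$ together with Slepian's lemma suggest that $P$ decreases whenever some $\langle v_i,v_j\rangle$ increases, i.e. one wants all pairwise inner products as small as possible. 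One concrete output of this picture is a lower bound: $\mathbb R^n\setminus N_i$ is the union of the half-spaces $\{y:\langle y,v_j-v_i\rangle>\tfrac12|v_i-v_j|^2\}$, so a union bound gives $P\ge\pi^{n/2}\bigl(n+1-2\sum_{i<j}\Phi(|v_i-v_j|/2)\bigr)$ with $\Phi(t)=\pi^{-1/2}\int_t^\infty e^{-s^2}\,ds$; since $t\mapsto\Phi(\sqrt{(r^2-t)/2})$ is convex and increasing while $\sum_{i<j}\langle v_i,v_j\rangle\ge-\tfrac{n+1}{2}r^2$, the right-hand side is maximized at the regular simplex. This bound is, however, never tight, so it only supports the conjecture rather than proving it.

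To close the gap I would try to reduce the problem to an auxiliary statement about the Gaussian measure of a simplex. The loss $(n+1)\pi^{n/2}-P=\sum_i\gamma\bigl((\mathbb R^n\setminus N_i)-v_i\bigr)$ is, by inclusion--exclusion over those half-spaces, an alternating sum of Gaussian measures of their intersections, whose second-order terms are two-dimensional orthant probabilities governed by the angles between the bisecting hyperplanes. For the \emph{regular} simplex the edge directions from any vertex meet pairwise at $60^\circ$, so those half-spaces are pairwise positively correlated and \v{S}id\'ak-/Slepian-type inequalities control each cone measure cleanly (for instance $\gamma(N_i-v_i)\ge\pi^{n/2}\prod_{j\ne i}\bigl(1-\Phi(|v_i-v_j|/2)\bigr)$); for a skew inscribed simplex some of these correlations turn negative and those tools fail. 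Thus the simplex conjecture would follow from the assertion that among simplices inscribed in a fixed sphere the regular one minimizes the Gaussian measure of the relevant normal cone (equivalently, of the appropriate spherical simplex), which I would isolate and attack separately — it is of the same flavour as, but does not seem to follow from, the known Gaussian correlation and Gaussian isoperimetric inequalities.

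The main obstacle, and the reason the conjecture remains open, is that $P$ is neither visibly convex nor visibly concave as a function of $G$ on the admissible spectrahedron, so the extreme-point and symmetrization arguments do not terminate, and the available Gaussian comparison inequalities (Slepian, \v{S}id\'ak, Royen's Gaussian correlation inequality) each compare a configuration only with a single dominating one, whereas no inscribed simplex dominates all the others entrywise in $G$ — so these tools yield only one-sided estimates. A genuinely new ingredient, presumably a Gaussian inequality tailored to simplices or a global (not purely local) variational argument exploiting the rank constraint $\operatorname{rank}G\le n$, appears to be needed; the known local optimality at every energy and optimality as $r\to\infty$ \cite{dunbridge1965,dunbridge1967} should reappear in this framework as the second-variation computation and the large-deviations limit, respectively.
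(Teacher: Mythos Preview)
This statement is an open conjecture, and the paper does not prove it either; what the paper supplies is a \emph{reduction}, so the right comparison is between strategies, and yours is genuinely different from the paper's.

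You decompose $P=\sum_i\gamma(V_i-v_i)$ over Voronoi cells (which, since $|v_i|=r$, are the normal cones of $\Delta$ at its vertices) and then reach for Gaussian comparison tools (Slepian, Price--Stein, \v{S}id\'ak), extracting an explicit but non-tight union-bound lower estimate and ending at a somewhat loosely stated auxiliary problem about Gaussian measures of normal cones. The paper instead slices the integral ``horizontally'' in an auxiliary variable $y$: writing $P=\bar\mu\{(x,y):0\le y\le\max_i e^{2x\cdot v_i-r^2}\}$ with $d\bar\mu=e^{-|x|^2}\,dx\,dy$, the complement of each $y$-slice is an intersection of $n+1$ halfspaces with outer normals $v_i$, all at the same distance from the origin. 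Thus at every slice the question becomes ``among generalized simplices containing a fixed ball, which minimizes the Gaussian measure?'', and the paper isolates this as Conjecture~\ref{conjecture:gausssimplex}. The essentially unbounded case ($0\notin\conv\{v_i\}$) is handled separately by a continuous Kneser--Poulsen argument.

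What the paper's route buys is a single clean target conjecture, already known for $n=3$ via Fejes~T\'oth's spherical-cap theorem, and a natural further strengthening (Conjecture~\ref{conjecture:radialsimplex}) that rules out degenerate minimizers. What your route buys is a direct bridge to the Gaussian-comparison literature and an explicit quantitative lower bound maximized at the regular simplex, at the cost of a fuzzier auxiliary endpoint (minimizing a \emph{sum} of translated cone measures does not obviously reduce to minimizing one cone). Note also that the paper's use of \v{S}id\'ak is confined to the \emph{antipodal} case, where the slice-set is an intersection of symmetric slabs and \v{S}id\'ak is exactly the right tool; your own diagnosis that the pairwise correlations can turn negative for skew inscribed simplices is precisely why \v{S}id\'ak/Royen do not bite on the simplex problem itself.
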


The case of energy tending to zero or to infinity in this conjecture was considered in~\cite{balakri1961}; its validity for $n=3$ was established in~\cite{ls1966}.

Our plan is as follows: In Sections \ref{section:slicing-sphere} and \ref{section:slicing-gaussian} we explain how the problems of maximizing $P$ may be reduced to problems of optimal covering of a sphere by caps and to minimizing Gaussian measure of an outscribed simplex. In Theorem~\ref{theorem:sidak} of Section~\ref{section:slicing-gaussian} and in Section~\ref{section:antipodal} we prove some optimality results for antipodal configurations, inspired by such a configuration in the example of Steiner~\cite{stein1994}. In Section~\ref{section:antipodal-numerical} we give some numerical results for antipodal configurations.

The paper is organized as follows: In Section~\ref{section:slicing-sphere} we overview the techniques that prove Conjecture~\ref{conjecture:simplex} in dimensions $\le 3$. In Section~\ref{section:slicing-gaussian} we provide another approach that would reduce the problem to another conjecture about the Gaussian measure of a (generalized) simplex (Conjecture~\ref{conjecture:gausssimplex}) and we show that this approach does give some information for ``antipodal'' configurations of points, where, for every point $x$ present in the configuration, the point $-x$ is also present, here we recall and use \v{S}id\'{a}k's lemma about the Gaussian measure. In Section~\ref{section:antipodal-numerical} we study the optimality of antipodal configurations with varying lengths of vectors keeping the total energy, establish the optimality of the equal length configuration for $4$-point antipodal configurations, and make numeric tests showing that for larger number of vectors the optimal lengths have more complex behavior.

\subsection*{Acknowledgments.}
The authors thank Grigori Kabatianski for explaining this problem to us.

\section{Slicing with the uniform measure}
\label{section:slicing-sphere}

The first thing that comes to mind is to represent the integral as ``the volume under the graph'', that is
\begin{equation}
\label{equation:volume}
P(v_1,\ldots, v_N) = \vol \left\{(x, y)\in\mathbb R^{n+1} : \exists i\ 0\le y \le e^{-|x-v_i|^2}\right\}.
\end{equation}

Then we can fix a value $y\in [0,1]$ and try to maximize the $n$-dimensional volume of the corresponding slice of the set in the right hand side of (\ref{equation:volume}). If the maximum of the volume of the section will be obtained at the same configuration for every value of $y$ then the maximum of the total volume $P(v_1,\ldots, v_N)$ will also be there.

The corresponding slice is the set $\bigcup_i \{ |x - v_i|^2 \le -\ln y \}$, that is a union of balls of the same radius. So Conjecture~\ref{conjecture:simplex} would follow from the following stronger

\begin{conjecture}
\label{conjecture:balls}
For $N=n+1$ and fixed $|v_1|=\dots = |v_N|=r$ and $R>0$ the maximal volume of the union of balls $\bigcup_i B_{v_i} (R)$ is attained at any configuration forming a regular simplex inscribed into the ball of radius $r$.
\end{conjecture}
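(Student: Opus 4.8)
The plan, in brief: reduce the problem to an ``overlap integral'' (Step 1), bound that integral below by the pairwise overlaps via a convexity inequality (Steps 2--3), and observe that this already settles everything except configurations exhibiting triple overlaps at large $R$. \emph{Step 1 (an exact identity).} For $x\in\mathbb R^n$ put $f(x)=\#\{i:\ |x-v_i|\le R\}$. Then $\vol\bigl(\bigcup_i B_{v_i}(R)\bigr)=\int\mathbf 1[f(x)\ge 1]\,dx$ while $\sum_i\vol B_{v_i}(R)=\int f(x)\,dx=(n+1)\omega_n R^n$, so
\[
\vol\Bigl(\bigcup_i B_{v_i}(R)\Bigr)=(n+1)\omega_n R^n-\int_{\mathbb R^n}\max\bigl(f(x)-1,0\bigr)\,dx .
\]
Since the first term does not depend on $\{v_i\}$, it is equivalent to show that the ``excess'' $E(\{v_i\}):=\int\max(f-1,0)$ is smallest at the regular simplex.

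\emph{Step 2 (two-ball overlaps are convex in the inner product).} The volume of $B_p(R)\cap B_q(R)$ depends only on $d=|p-q|$; one computes $g(d)=2\omega_{n-1}\int_{d/2}^{R}(R^2-t^2)^{(n-1)/2}\,dt$ for $0\le d\le 2R$ and $g\equiv 0$ for $d\ge 2R$, hence $g'(d)=-\omega_{n-1}(R^2-d^2/4)^{(n-1)/2}$ on $[0,2R]$ --- negative and increasing --- so that $g$ is $C^1$, nonincreasing and convex on $[0,\infty)$. As $|v_i-v_j|^2=2r^2-2\langle v_i,v_j\rangle$, this gives $\vol\bigl(B_{v_i}(R)\cap B_{v_j}(R)\bigr)=G(\langle v_i,v_j\rangle)$ with $G$ convex and nondecreasing.

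\emph{Step 3 (Jensen plus the mean inner product).} Since $\sum_{i<j}\langle v_i,v_j\rangle=\tfrac12\bigl(\bigl|\sum_i v_i\bigr|^2-(n+1)r^2\bigr)\ge-\tfrac{(n+1)r^2}{2}$, the average of the $\binom{n+1}{2}$ inner products is at least $-r^2/n$; convexity of $G$ and then its monotonicity yield
\[
\sum_{i<j}\vol\bigl(B_{v_i}(R)\cap B_{v_j}(R)\bigr)=\sum_{i<j}G(\langle v_i,v_j\rangle)\ \ge\ \binom{n+1}{2}\,G\!\left(-\frac{r^2}{n}\right),
\]
and the right-hand side is exactly the value for the regular simplex inscribed in $rS^{n-1}$ (there $\sum_i v_i=0$ and every $\langle v_i,v_j\rangle=-r^2/n$). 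Equality forces all inner products equal to $-r^2/n$, which for $n+1$ unit vectors in $\mathbb R^n$ forces the regular simplex; so, once its balls overlap, the regular simplex is the unique minimizer of $\sum_{i<j}\vol\bigl(B_{v_i}(R)\cap B_{v_j}(R)\bigr)$.

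\emph{Step 4 (easy cases, and the obstacle).} Pointwise $0\le\max(m-1,0)\le\binom m2$ with equality for $m\le 2$, and, counting pairs of balls through a point, $\int\binom{f}{2}=\sum_{i<j}\vol\bigl(B_{v_i}(R)\cap B_{v_j}(R)\bigr)$ always. Hence for a configuration $C$ with no triple overlaps ($f_C\le 2$), $E(C)=\sum_{i<j}\vol(B_i\cap B_j)\ge\binom{n+1}{2}G(-r^2/n)\ge E(\text{regular simplex})$, the last step because $\binom m2\ge\max(m-1,0)$; so the union volume of $C$ is at most that of the regular simplex. Thus the conjecture holds against every configuration without triple overlaps, and in particular for all $R\le\tfrac12 r\sqrt{2(n+1)/n}$, where even the regular simplex's balls are pairwise disjoint and $\vol(\bigcup)$ attains its absolute maximum $(n+1)\omega_n R^n$. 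The hard part will be $R$ so large that a competing configuration genuinely has triple (or higher) overlaps: then $E(C)<\sum_{i<j}\vol(B_i\cap B_j)$ and the inclusion--exclusion expansion of $\vol(\bigcup)$ no longer truncates with a favorable sign, so Step 3 by itself is insufficient. To finish I would attempt either a deformation argument --- showing that if $B_i\cap B_j\cap B_k\ne\emptyset$ then some motion of the $v_\ell$ along $rS^{n-1}$ strictly increases $\vol\bigl(\bigcup_\ell B_{v_\ell}(R)\bigr)$ (its first variation in $v_i$ being $\tfrac1R\int(x-v_i)$ over the exposed part of $\partial B_{v_i}(R)$), so every configuration flows monotonically to the regular simplex --- or an appeal to Kneser--Poulsen-type monotonicity (known in the plane) to reduce to the ``most spread'' configuration; making either route work in all dimensions is where I expect the genuine difficulty to lie.
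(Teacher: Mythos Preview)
Your Steps 1--3 are correct and yield a clean proof that the regular simplex minimizes the \emph{sum of pairwise overlaps} $\sum_{i<j}\vol(B_{v_i}(R)\cap B_{v_j}(R))$ among configurations on $rS^{n-1}$. But, as you yourself acknowledge in Step~4, this is not the same as minimizing $E=\int\max(f-1,0)$, and that gap is genuine: the inequality $\max(m-1,0)\le\binom{m}{2}$ goes the wrong way to help once $f\ge 3$ somewhere.

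The key point is that this statement is a \emph{conjecture} in the paper; the paper does not prove it. The paper reduces it further to a conjecture about unions of spherical caps, notes that the only known case is $n=3$ (Fejes~T\'oth), and then explicitly names the obstruction you have hit: Fejes~T\'oth's argument ``only work[s] for the case when in the presumably optimal configuration the caps only intersect pairwise and no point is covered by three of them.'' On $S^2$ this is automatic --- once three caps of the regular tetrahedron meet they already cover the whole sphere --- but for $n\ge 4$ it fails. Your partial proof is precisely the ball-volume analogue of this: it dispatches all competitors with $f\le 2$ and stalls at triple overlaps.

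Neither of your proposed completions is currently available. General Kneser--Poulsen is proved only in the plane; the continuous version of Csik\'os that the paper uses elsewhere requires a continuous deformation along which \emph{all} pairwise distances are monotone, and there is no such motion from an arbitrary spherical configuration to the regular simplex. The first-variation idea would need a global argument excluding other critical configurations, which is exactly the missing piece. So what you have is a correct treatment of the easy regime together with an accurate diagnosis of the known obstacle --- not a proof of the conjecture, and the paper does not claim one either.
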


By further slicing with the distance to the origin this conjecture would follow from an even stronger

\begin{conjecture}
\label{conjecture:caps}
For $N=n+1$ and $R>0$ the maximal area of the union of $N$ spherical caps of radius $R$ in the unit sphere $\mathbb S^{n-1}$ is attained when the centers of the caps form a regular simplex inscribed into the unit sphere.
\end{conjecture}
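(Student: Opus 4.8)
The plan is to split the range of the cap radius $R$ into three regimes. Write $C_v(R)=\{x\in\mathbb S^{n-1}:\langle x,v\rangle\ge\cos R\}$ for the cap of radius $R$ about $v$, let $\omega_n(R)$ be its area, and let $\alpha=\arccos(-1/n)$ be the common pairwise angle of the regular simplex inscribed in $\mathbb S^{n-1}$. For small $R$, namely $R\le\alpha/2$, the caps of the regular simplex are pairwise disjoint (or just touching), so its union has area $N\omega_n(R)$, which is the absolute maximum of $\vol(\bigcup_i C_{v_i}(R))$ over \emph{all} configurations; hence the simplex is optimal there. For large $R$, namely $R\ge\pi-\alpha/2$, pass to the complement: $\mathbb S^{n-1}\setminus\bigcup_i C_{v_i}(R)=\bigcap_i C_{-v_i}(\pi-R)$, the complementary caps of the simplex are again pairwise disjoint, so this intersection is empty, the union is all of $\mathbb S^{n-1}$, and the simplex is again optimal. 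This reduces everything to the window $\alpha/2\le R\le\pi-\alpha/2$.

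Within this window I would first try a soft reduction at the two ends. For $R$ just above $\alpha/2$, the exact inclusion--exclusion $\vol(\bigcup_i C_{v_i}(R))=N\omega_n(R)-\sum_{i<j}\vol(C_{v_i}(R)\cap C_{v_j}(R))+\cdots$, together with the fact that $\vol(C_u(R)\cap C_v(R))$ is a strictly decreasing function of $\angle(u,v)$ that vanishes once $\angle(u,v)\ge 2R$, shows that near $R=\alpha/2$ only the overlap of the closest pair matters, and among $n+1$ unit vectors the minimal pairwise angle is maximized exactly when all pairwise inner products equal $-1/n$ and $\sum_i v_i=0$, i.e.\ at the regular simplex; so the conjecture is a perturbative consequence there. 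Symmetrically, near $R=\pi-\alpha/2$ one argues with $\bigcap_i C_{-v_i}(\pi-R)$, where a \v{S}id\'ak-type correlation inequality should say that spreading the centers shrinks the intersection. The genuinely open part is the middle of the window, where many caps overlap in complicated patterns.

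For the middle of the window the most promising route, and the one that settles $n\le3$, is a Dirichlet--Voronoi reduction in the spirit of L.~Fejes T\'oth: decompose $\mathbb S^{n-1}$ into the Voronoi cells $D_i$ of $\{v_i\}$, write $\vol\big(\bigcup_iC_{v_i}(R)\big)=\sum_i\vol\big(C_{v_i}(R)\cap D_i\big)$, and bound each term from above by its value for the corresponding cell of the regular simplex, so that the whole inequality collapses to one sharp ``one-cell'' estimate with equality exactly at the simplex. I expect this one-cell estimate to be the main obstacle. Unlike the classical packing and covering problems, the Voronoi cell of a simplex vertex here is a spherical polytope (an intersection of $n$ hemispheres) rather than a cap, so $C_{v_i}(R)\cap D_i$ is a cap cut by a polytope, and even the correct formulation of the conjectural one-cell inequality is unclear for $n\ge4$; in dimensions $\le3$ one can still push it through by hand, but in general no such estimate is known. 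A fallback is variational: the maximum is attained by compactness, at a maximizer the $\mathbb S^{n-1}$-constrained gradient in each $v_i$ (the integral of the outward unit normal over the part of $\partial C_{v_i}(R)$ not covered by the other caps) must be radial, and one would try to deduce from this stationarity, plus a hidden convexity of the functional, that all pairwise angles coincide. The obstruction there is ruling out spurious symmetric critical configurations — cross-polytope-like ones, or configurations spanning a proper subspace — which appears to need essentially new input.
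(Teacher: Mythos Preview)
The paper does not prove this statement: it is labeled a Conjecture and explicitly left open for $n\ge 4$. There is therefore no ``paper's own proof'' to compare against. The paper only records that Fejes~T\'oth's result settles the case $n=3$, and then explains why that argument does \emph{not} extend to higher dimensions.

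Your proposal is likewise not a proof, and you are clearly aware of this. The bookend regimes $R\le\alpha/2$ and $R\ge\pi-\alpha/2$ are handled correctly and are trivial: in the first the simplex caps are disjoint so the union has the maximal possible area $N\omega_n(R)$; in the second the simplex caps cover $\mathbb S^{n-1}$ entirely. But for the middle window you explicitly write that the one-cell Fejes~T\'oth estimate is ``the main obstacle,'' that ``in general no such estimate is known,'' and that the variational fallback ``appears to need essentially new input.'' That is an outline of possible strategies, not a proof.

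Your diagnosis of the obstruction matches the paper's exactly. The paper points out that the Fejes~T\'oth argument works on $\mathbb S^2$ only because in the regular configuration the caps intersect at most pairwise until they already cover the whole sphere; once triple overlaps occur, as they do in $\mathbb S^{n-1}$ for $n\ge 4$ well before full coverage, the Dirichlet--Voronoi reduction breaks down. This is precisely the failure mode you anticipate for your one-cell inequality. So your write-up is an accurate summary of the state of the art as the paper presents it, but it does not go beyond it, and should not be labeled a proof.
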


In fact, the case $n=3$ of the latter conjecture (when the sphere is $2$-dimensional) was resolved positively in~\cite{lft1953}, this was noted in~\cite{ls1966} and resulted in

\begin{theorem}[Landau--Slepian, 1966]
\label{theorem:fejestoth}
Conjecture~\ref{conjecture:simplex} holds true for $n=3$ and $N=4$.
\end{theorem}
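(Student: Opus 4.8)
The plan is to run the slicing reduction outlined above all the way down, and then cite the Fejes Tóth–type packing/covering result that handles the two-dimensional sphere. Concretely, I would proceed in three layers of slicing, each time reducing the problem of maximizing a volume to maximizing a lower-dimensional volume of a typical slice, and argue that the same configuration (the regular tetrahedron inscribed in the ball of radius $r$) is simultaneously optimal for every slice parameter, so that it is optimal for the integral.

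First, represent $P(v_1,\ldots,v_4)$ as the volume under the graph, as in~\eqref{equation:volume}, and slice by the height $y\in[0,1]$. As noted, the slice at height $y$ is the union of four balls of the common radius $R=R(y)=\sqrt{-\ln y}$ centered at the $v_i$, so it suffices to prove Conjecture~\ref{conjecture:balls} in dimension $n=3$: for fixed $|v_i|=r$ and fixed $R>0$, the volume of $\bigcup_i B_{v_i}(R)$ is maximized by the regular tetrahedron inscribed in the ball of radius $r$. Second, slice this three-dimensional union of balls by the distance $\rho$ to the origin: for each fixed radius $\rho$, the intersection of $B_{v_i}(R)$ with the sphere $\rho\,\mathbb S^2$ is a spherical cap whose center is $\rho v_i/|v_i|$ (a rescaled copy of $v_i$) and whose angular radius depends only on $\rho$, $r$, $R$ — in particular it is the same for all four caps and independent of the configuration. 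Hence for each $\rho$ the slice is a union of four congruent spherical caps on a $2$-sphere with centers at a rescaled copy of the configuration, so it suffices to prove Conjecture~\ref{conjecture:caps} for $n=3$, $N=4$. Third, invoke the result of Fejes Tóth~\cite{lft1953}, cited via~\cite{ls1966}: among four caps of a given angular radius on $\mathbb S^2$, the union has maximal area when the centers form a regular tetrahedron. Since the optimal configuration is the same (the regular tetrahedron, up to rotation) for every value of $\rho$, it is optimal for the area of the $\rho$-slice; since it is optimal for every $\rho$, it is optimal for the volume of the union of balls; and since it is optimal for every height $y$, it is optimal for $P$. This proves the theorem.

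The only genuine mathematical input is the third step — the spherical covering result of Fejes Tóth on $\mathbb S^2$ — which we are entitled to assume as an external citation. The main thing one has to be careful about is the \emph{geometry of the slices}: one must check that at each slicing stage the slice really is a union of congruent copies of the same shape (balls, then caps) with centers obtained from $\{v_i\}$ by a configuration-independent rescaling, and that when the cross-section is empty (e.g. $\rho$ too large, or $-\ln y<0$) this contributes nothing and causes no harm. A secondary subtlety is that Conjecture~\ref{conjecture:caps}, hence the optimal placement of cap centers, is stated for centers \emph{inscribed in the unit sphere}, whereas after rescaling the centers lie on $\rho\,\mathbb S^2$ of radius $\rho r/|v_i| = \rho$ (all $|v_i|=r$); this is just a homothety of $\mathbb S^2$ and does not affect which arrangement is optimal. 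With these checks in place the three-fold slicing argument is routine, and the theorem follows.
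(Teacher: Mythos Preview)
Your proposal is correct and follows exactly the argument the paper itself uses: reduce Conjecture~\ref{conjecture:simplex} to Conjecture~\ref{conjecture:balls} by the ``volume under the graph'' slicing, then to Conjecture~\ref{conjecture:caps} by slicing with the distance to the origin, and finally invoke the Fejes~T\'oth result~\cite{lft1953} for four caps on $\mathbb S^2$. The additional care you take in checking that each slice is a union of congruent balls/caps with centers a configuration-independent rescaling of the $v_i$ is appropriate and matches the paper's reasoning.
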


Moreover, in~\cite{lft1953} other regular configurations (corresponding to the vertices of a regular solid body) were proved to maximize the area of the union of equal caps, resulting in optimality of the corresponding spherical codes. This was also noted in~\cite{ls1966}.

After that, in~\cite{ls1966} two analytical-geometrical lemmas about the caps on a two-dimensional sphere were shown to hold in larger dimensions and it was concluded that Conjecture~\ref{conjecture:simplex} was therefore established for arbitrary $n$. However, the proof of Conjecture~\ref{conjecture:caps} for $n=3$ in~\cite{lft1953} does not generalize to larger dimensions because the argument only work for the case when in the presumably optimal configuration the caps only intersect pairwise and no point is covered by three of them. This is not a problem in $\mathbb S^2$, since when three caps intersect in the regular configuration then those caps cover the whole $\mathbb S^2$ and the assertion holds trivially.

In the thesis~\cite{farber1968} we see that this problem in the argument of~\cite{ls1966} was evident to the experts.

\section{Slicing with the Gaussian measure}
\label{section:slicing-gaussian}

Here we propose a different approach reducing the problem to estimates for Gaussian measures instead of spherical measures. Let us rewrite the value to optimize differently:

\begin{equation}
\label{equation:gaussian}
P(v_1,\ldots, v_N) = \int_{\mathbb R^n} \max_i e^{2 x\cdot v_i  - |v_i|^2}\; e^{-|x|^2}dx.
= \bar \mu \left\{(x, y)\in\mathbb R^{n+1} : \exists i\ 0\le y \le e^{2 x\cdot v_i  - |v_i|^2}\right\},
\end{equation}
here $\bar \mu$ is the measure with density $e^{-|x|^2} dx dy$. Again, we can fix $y$ now and maximize the measure $\mu$ of any section, where $\mu$ is the Gaussian measure with density $e^{-|x|^2}dx$. The set whose measure is maximized will be a union of halfspaces:
$$
C_y(v_1,\ldots, v_N) = \bigcup_i \left\{ 2 x\cdot v_i - |v_i|^2 \ge \ln y\right\}.
$$

Taking the complement, we obtain

\begin{lemma}
\label{lemma:min-gauss}
The value $P(v_1,\ldots, v_N)$ is maximized at a given configuration if the Gaussian measure of
$$
S_y(v_1,\ldots, v_N) = \bigcap_i \left\{ 2 x\cdot v_i - |v_i|^2 \le \ln y\right\}
$$
is minimized at the same point set $(v_1,\ldots, v_N)$ for any value of $y$.
\end{lemma}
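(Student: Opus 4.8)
The plan is to turn $P$ into a one-parameter integral of Gaussian measures of ``upper level sets'' and then pass to complements. Starting from the representation in~(\ref{equation:gaussian}), for each fixed $x$ I would write
\[
\max_i e^{2x\cdot v_i-|v_i|^2}=\int_0^\infty \mathbf{1}\{x\in C_y(v_1,\dots,v_N)\}\,dy,
\]
because $y\le\max_i e^{2x\cdot v_i-|v_i|^2}$ is equivalent to $2x\cdot v_i-|v_i|^2\ge\ln y$ for some $i$, i.e.\ to $x\in C_y$. Substituting this into $P=\int_{\mathbb R^n}\max_i e^{2x\cdot v_i-|v_i|^2}\,e^{-|x|^2}\,dx$ and applying Tonelli's theorem (the integrand is nonnegative) gives
\[
P(v_1,\dots,v_N)=\int_0^\infty \mu\bigl(C_y(v_1,\dots,v_N)\bigr)\,dy,
\]
where $\mu$ is the Gaussian measure of density $e^{-|x|^2}\,dx$. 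This integral is finite: since $2x\cdot v_i-|v_i|^2-|x|^2=-|x-v_i|^2$, the original integrand is at most $\sum_i e^{-|x-v_i|^2}\in L^1(\mathbb R^n)$, so $P<\infty$ and the slicing is legitimate.

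Next I would relate $\mu(C_y)$ to $\mu(S_y)$ by complementation. The set $C_y$ is a finite union of closed halfspaces $\{2x\cdot v_i-|v_i|^2\ge\ln y\}$, and its complement $\bigcap_i\{2x\cdot v_i-|v_i|^2<\ln y\}$ differs from $S_y(v_1,\dots,v_N)$ only by points lying on one of the finitely many hyperplanes $\{2x\cdot v_i-|v_i|^2=\ln y\}$, an exceptional set that is Lebesgue-null and hence $\mu$-null. Therefore $\mu(S_y)=\mu(\mathbb R^n)-\mu(C_y)=\pi^{n/2}-\mu(C_y)$ for every $y$, so $\mu(C_y)$ is as large as possible (over whatever family of configurations one is optimizing) exactly when $\mu(S_y)$ is as small as possible.

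Combining the two steps would finish the argument: if $(v_1,\dots,v_N)$ minimizes $\mu(S_y(\cdot))$ for every $y>0$ within the admissible family, then $\mu(C_y(v_1,\dots,v_N))\ge\mu(C_y(w_1,\dots,w_N))$ for every competitor $(w_1,\dots,w_N)$ in that family and every $y$, and integrating this pointwise inequality in $y$ yields $P(v_1,\dots,v_N)\ge P(w_1,\dots,w_N)$. I do not expect a genuine obstacle here --- the lemma is essentially the slicing identity itself --- but one should carry out the bookkeeping through $\mu(C_y)$ rather than through $\pi^{n/2}-\mu(S_y)$, since $\int_0^\infty\pi^{n/2}\,dy$ diverges while $\int_0^\infty\mu(C_y)\,dy=P$ converges; naively splitting the integrand would be the one way to go wrong.
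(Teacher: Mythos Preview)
Your argument is correct and coincides with the paper's own reasoning: the paper derives the lemma directly from the representation~(\ref{equation:gaussian}) by slicing in $y$ (so that each slice has $\mu$-measure $\mu(C_y)$) and then passing to the complement $S_y$; your layer-cake/Tonelli write-up is exactly this computation made explicit. Your closing caution about not splitting $\int_0^\infty(\pi^{n/2}-\mu(S_y))\,dy$ is apt but not needed for the comparison, since one only integrates the pointwise inequality $\mu(C_y(v))\ge\mu(C_y(w))$.
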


An advantage of this approach is that the set $S_y(v_1,\ldots, v_N)$ is a (possibly unbounded) convex polyhedron. From the inequality in~\cite{sidak1967} we readily obtain:

\begin{theorem}
\label{theorem:sidak}
If we consider sets of $2N$ points ($N\le n$) in $\mathbb R^n$ of the form $\{v_i\}_{i=1}^N$ with prescribed $|v_i| =r_i$ then $P(v_1, - v_1, \ldots, v_N, - v_N)$ is maximized when all the vectors $v_i$ are orthogonal to each other.
\end{theorem}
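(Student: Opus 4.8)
The plan is to invoke Lemma~\ref{lemma:min-gauss}: it suffices to check that, for every fixed value of $y$, the Gaussian measure of the polyhedron $S_y(v_1,-v_1,\dots,v_N,-v_N)$ is minimized when the vectors $v_1,\dots,v_N$ are pairwise orthogonal. Since $\mu$ (density $e^{-|x|^2}$) differs from the centered Gaussian \emph{probability} measure $\gamma=\pi^{-n/2}\mu$ only by the fixed positive factor $\pi^{n/2}$, minimizing $\mu(S_y)$ is the same as minimizing $\gamma(S_y)$, and for $\gamma$ we may apply the \v{S}id\'ak inequality.

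First I would determine the shape of $S_y$ in the antipodal case. For each $i$, the two half-space constraints coming from $v_i$ and from $-v_i$,
$$
2x\cdot v_i - |v_i|^2 \le \ln y \qquad\text{and}\qquad -2x\cdot v_i - |v_i|^2 \le \ln y,
$$
combine into the single origin-symmetric slab $\{\, x : |x\cdot v_i| \le c_i \,\}$ with $c_i := \tfrac12\bigl(r_i^2 + \ln y\bigr)$; hence
$$
S_y(v_1,-v_1,\dots,v_N,-v_N) \;=\; \bigcap_{i=1}^N \bigl\{\, x : |x\cdot v_i| \le c_i \,\bigr\}.
$$
If some $c_i\le 0$ this set has measure $0$ and nothing has to be proved, so assume $c_i>0$ for all $i$. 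Now \v{S}id\'ak's inequality~\cite{sidak1967} states that for any centered Gaussian probability measure $\gamma$ on $\mathbb R^n$, any vectors $v_1,\dots,v_N$, and any positive $c_1,\dots,c_N$,
$$
\gamma\Bigl(\bigcap_{i=1}^N \{\, x : |x\cdot v_i|\le c_i \,\}\Bigr) \;\ge\; \prod_{i=1}^N \gamma\bigl(\{\, x : |x\cdot v_i|\le c_i \,\}\bigr).
$$
By rotational invariance of $\gamma$, the $i$-th factor on the right equals $\gamma_1\bigl(\{|t|\le c_i/r_i\}\bigr)$ for the one-dimensional marginal $\gamma_1$ of $\gamma$, and therefore depends only on $r_i$ and $c_i$, both of which are fixed; so the right-hand side is a constant $K=K(y;r_1,\dots,r_N)$ that does not depend on the \emph{directions} of the $v_i$. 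Thus $\gamma(S_y)\ge K$ for every admissible configuration. On the other hand, when $v_1,\dots,v_N$ are pairwise orthogonal --- which is possible precisely because $N\le n$ --- the set $S_y$ is, in the orthonormal directions $v_i/|v_i|$, a product of slabs (times the whole complementary subspace $\mathbb R^{n-N}$), so $\gamma(S_y)$ factorizes and equals exactly $K$. Hence the orthogonal configuration attains the universal lower bound $K$, i.e.\ it minimizes $\gamma(S_y)$, and this holds for every $y$; Lemma~\ref{lemma:min-gauss} then gives that it maximizes $P(v_1,-v_1,\dots,v_N,-v_N)$.

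I do not expect a genuinely hard step here --- the argument is in essence a translation of \v{S}id\'ak's lemma into the slicing setup of Section~\ref{section:slicing-gaussian}. The three points that need care are: (i) noticing that \emph{antipodality} is exactly what makes the half-space constraints pair up into origin-symmetric slabs, which is the regime in which \v{S}id\'ak applies; (ii) noticing that fixing the lengths $r_i$ fixes the one-dimensional slab probabilities, so that the \v{S}id\'ak product bound is a configuration-independent \emph{constant} rather than a quantity that itself moves with the configuration; and (iii) checking that the orthogonal configuration attains the bound, which is just the factorization of a product Gaussian measure over a box (and this is where the hypothesis $N\le n$ is used). The degenerate cases ($c_i\le 0$, or linearly dependent $v_i$) are harmless: the former is trivial, and \v{S}id\'ak's inequality remains valid for possibly degenerate centered Gaussian measures.
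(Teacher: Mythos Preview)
Your proof is correct and follows essentially the same route as the paper: identify $S_y$ as an intersection of origin-symmetric slabs, apply \v{S}id\'ak's inequality to bound its Gaussian measure below by the product of the individual slab measures, observe that this product depends only on the fixed $r_i$ and $y$, and note that equality holds in the orthogonal case. Your write-up is in fact more careful than the paper's --- you correctly normalize to a Gaussian \emph{probability} measure before invoking \v{S}id\'ak (the paper's inequality as literally written with the unnormalized $\mu$ is dimensionally off), you handle the degenerate case $c_i\le 0$, and your slab half-width $c_i=\tfrac12(r_i^2+\ln y)$ has the correct sign.
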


\begin{proof}
In this case, the set $S_y$ is an intersection of several symmetric planks
$$
P_i = \left\{|(x, v_i)| \le \frac{|v_i|^2 - \ln y}{2} \right\},
$$
and the Gaussian measure of this intersection is minimized when all the stripes are perpendicular. This follows from the \v{S}id\'{a}k inequality~\cite{sidak1967}
$$
\mu(P_1\cap \dots\cap P_N) \ge \mu(P_1)\cdot \dots \cdot \mu(P_N),
$$
which becomes an equality in case when all the planks are perpendicular to each other. This perpendicularity is only possible when $N\le n$.
\end{proof}

We continue the discussion of such \emph{antipodal} configurations in Section~\ref{section:antipodal}. Similarly, Conjecture~\ref{conjecture:simplex} is therefore reduced to:

\begin{conjecture}
\label{conjecture:gausssimplex}
The Gaussian measure of a simplex $S$ containing a given ball $B_0(r)$ is minimized at the regular simplex with inscribed ball $B_0(r)$.
\end{conjecture}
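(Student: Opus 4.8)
The plan is to combine a scaling reduction with a variational analysis, so that the whole difficulty concentrates in a single step. First I would translate each facet of $S$ inward until it touches $B_0(r)$: this only shrinks $S$, hence decreases its Gaussian measure, while preserving $B_0(r)\subseteq S$, so we may assume $S=\bigcap_{i=1}^{n+1}\{x:x\cdot u_i\le r\}$ with unit outer normals $u_1,\dots,u_{n+1}$ that positively span $\mathbb R^n$. Since $\bigcap_i\{x\cdot u_i\le r\}=rK$ for $K:=\bigcap_i\{x\cdot u_i\le 1\}$, we have $\mu(S)=r^n\int_K e^{-r^2|y|^2}\,dy$, so Conjecture~\ref{conjecture:gausssimplex} is equivalent to: for every fixed $r>0$ the functional $\Phi_r(K):=\int_K e^{-r^2|y|^2}\,dy$, over all simplices $K$ circumscribed about the unit ball, is minimized exactly at the regular simplex $K_{\mathrm{reg}}$. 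The limit $r\to0^+$ is the classical theorem that $K_{\mathrm{reg}}$ is the unique simplex of minimal volume circumscribed about a ball, so the task is to propagate this to all $r$.

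Next I would carry out the Euler--Lagrange analysis. A compactness argument produces a minimizer once one rules out degeneration: along a sequence of normal configurations tending to $0\in\partial\conv\{u_i\}$ the simplices converge to an unbounded convex body still containing $B_0(r)$, and one must check that such a body has Gaussian measure at least that of the regular simplex with inscribed ball $B_0(r)$ --- routine for small $r$ but genuinely delicate for large $r$, where the leading terms agree and one has to compare how the exterior regions of the facets overlap. Granting this, the minimizer is an honest simplex, and perturbing one normal by $u_i\mapsto(u_i+tw)/|u_i+tw|$ with $w\perp u_i$ moves only the facet $F_i=S\cap\{x\cdot u_i=r\}$, which sweeps a layer of signed width $t\,(\xi\cdot w)$ near $\xi\in F_i$; thus
$$
\frac{d}{dt}\,\mu(S)\Big|_{t=0}=-\int_{F_i}(\xi\cdot w)\,e^{-|\xi|^2}\,d\mathcal H^{n-1}(\xi)=-\,w\cdot\!\int_{F_i}\xi\,e^{-|\xi|^2}\,d\mathcal H^{n-1}(\xi).
$$
Since $\xi\cdot u_i\equiv r$ on $F_i$, the $u_i$-component of this integral equals $r\int_{F_i}e^{-|\xi|^2}\,d\mathcal H^{n-1}$ automatically, so a configuration is critical precisely when the $e^{-|x|^2}$-weighted centroid of each facet $F_i$ is its contact point $ru_i$ with $B_0(r)$. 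The regular simplex satisfies this by symmetry, since $ru_i$ is the only point of $F_i$ fixed by the stabilizer of $u_i$ in the symmetry group.

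The hard part, which is essentially the content of the conjecture, is to show that this critical condition forces the configuration to be regular, so that $K_{\mathrm{reg}}$ is the unique critical point and hence the global minimum. I would pursue three complementary routes. (a) Convexity: parametrize configurations by the Gram matrix $G=(u_i\cdot u_j)$ and try to show $G\mapsto\mu(S(G))$ is convex in a parametrization adapted to the rank-$n$ constraint; combined with the criticality of $K_{\mathrm{reg}}$ already established, this would make it the global minimum. (b) Interpolation in $r$: with $s=r^2$, study the gap $G(s):=\Phi_{\sqrt s}(K)-\Phi_{\sqrt s}(K_{\mathrm{reg}})=\int(\mathbf 1_K-\mathbf 1_{K_{\mathrm{reg}}})\,e^{-s|y|^2}\,dy$, which is positive at $s=0$ by the minimal-volume theorem and positive as $s\to\infty$ by an inclusion--exclusion count of the exterior halfspaces $\{y\cdot u_i>1\}$ --- their pairwise intersections have vertices at distance $\sqrt{2/(1+u_i\cdot u_j)}$ from the origin, and $\max_{i<j}u_i\cdot u_j\ge-1/n$ with equality only for $K_{\mathrm{reg}}$ --- and then control $G(s)$ at intermediate $s$ via the radial distribution of $K\setminus K_{\mathrm{reg}}$ against $K_{\mathrm{reg}}\setminus K$. (c) Symmetrization: given a non-regular configuration, produce an explicit ``balancing'' move of a pair of normals toward their mirror position that strictly decreases $\mu(S)$, the obstacle being to control the simultaneous deformation of all $n+1$ facets.

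I expect route (c), and the intermediate-$s$ part of route (b), to be where the real difficulty lies; none of these steps looks routine, which matches the fact that Conjecture~\ref{conjecture:gausssimplex} is open. Even a proof for $n=2$ (triangles with a prescribed incircle) by a direct analysis of the critical equations, or for $n=3$ independent of Theorem~\ref{theorem:fejestoth}, would already be of interest. It is instructive that the antipodal situation is tractable precisely because there the halfspaces cutting out $S_y$ pair into symmetric slabs, so the intersection is governed by \v{S}id\'{a}k's inequality (Theorem~\ref{theorem:sidak}) --- a tool with no analogue for the non-symmetric simplex.
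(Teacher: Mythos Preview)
The statement you are addressing is a \emph{conjecture}; the paper does not prove it. The only case the paper records as settled is $n=3$, and that is obtained by slicing the Gaussian measure into spherical shells and invoking Fejes~T\'oth's theorem on unions of caps in $\mathbb S^2$ --- precisely the route you mention wanting to bypass. So there is no ``paper's own proof'' to compare against, and your document is not a proof either but an honest research plan: you yourself say that ``none of these steps looks routine, which matches the fact that Conjecture~\ref{conjecture:gausssimplex} is open.'' That self-assessment is accurate.

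Your preliminary reductions (pushing facets in to touch $B_0(r)$, the scaling $\mu(S)=r^n\int_K e^{-r^2|y|^2}\,dy$, the $r\to0$ limit recovering the minimal-volume circumscribed simplex) and your first-variation computation are correct, and the critical-point characterization --- that the $e^{-|x|^2}$-weighted centroid of each facet must be its contact point $ru_i$ --- is a clean formulation. But none of your three routes (a)--(c) is carried past the level of a heuristic, and you identify the obstacles yourself. In particular, the compactness step (ruling out degeneration to an unbounded polyhedron) is one place where the paper offers something concrete that you do not: rather than arguing ad hoc for each $r$, the paper proposes the stronger Conjecture~\ref{conjecture:radialsimplex} for \emph{all} radially symmetric decreasing densities, and observes (Lemmas~\ref{lemma:honestsimplex} and~\ref{lemma:minimizers}) that one may then modify the density at infinity so that $\int_0^\infty\rho(r)\,dr$ diverges, making degenerate simplices have infinite measure and forcing the minimizer to be honest. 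That trick sidesteps the ``genuinely delicate for large $r$'' issue you flag. Otherwise your outline is a reasonable catalogue of attacks on an open problem, not a proof with a fixable gap.
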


Of course, by slicing and using the result~\cite{lft1953} about spherical caps we conclude that this conjecture holds true for $n=3$.

\begin{proof}[Reduction of Conjecture~\ref{conjecture:simplex} to Conjecture~\ref{conjecture:gausssimplex}]
In order to make such a reduction we have to establish that unbounded \emph{generalized simplices}, that is sets determined by $n+1$ linear equations in $\mathbb R^n$, are ruled out.

Call a generalized simplex \emph{essentially unbounded} if it contains an open cone. Equivalently, its outer normals of facets do not contain the origin in their convex hull. In Conjecture~\ref{conjecture:simplex} this corresponds to the case when the convex hull of $\{v_i\}$ does not contain the origin. Let $p$ be the closest to the origin point in this convex hull.

Assume that $p$ points to the ``north'' and let $E$ be the corresponding ``equator'' of $\mathbb S^{n-1}$. The point $p$ is a convex combination of some of $v_i$'s, without loss of generality let them be $v_1, \ldots, v_k$. Note that these $v_1,\ldots, v_k$ are at the same distance from $E$ and if we move them uniformly to $E$ (and keep other $v_i$'s fixed) then the pairwise distances between them increase. Moreover, any distance $|v_i - v_j|$ for $i\le k$ and $j > k$ also increases, in order to see this it is sufficient to consider the three-dimensional space spanned by $v_i, p, v_j$ and apply the elementary geometry.

Now we use the reduction of Conjecture~\ref{conjecture:simplex} to Conjecture~\ref{conjecture:balls} and analyze the volume of the union $\bigcup_{i=1}^{n+1} B_{v_i} (r)$ for every radius $r>0$. The continuous case of the Kneser--Poulsen conjecture established in~\cite{csi1998} asserts that for every $r>0$ the volume of such a union does not decrease when we move $v_1,\ldots, v_k$ to the equator. Hence the total value $P(v_1,\ldots, v_{n+1})$ does not decrease either. Now observe that at the end the origin will be in the convex hull of $v_i$'s.

Call a generalized simplex \emph{degenerate} if it is not essentially unbounded, but is still unbounded. Every degenerate simplex is a limit (in the topology given by the family of metrics $\dist_R(X, Y) = \dist_\textrm{Haus}( X\cap B_0(R), Y\cap B_0(R) )$, ($R>0$) of honest simplices; and it is easy to see that the Gaussian measure of a degenerate simplex will be the limit of the Gaussian measures of those honest simplices. So the inequality would follow, since we do not want it to be strict.
\end{proof}

After this, one may try to establish Conjecture~\ref{conjecture:gausssimplex} by taking the minimal example and studying its structure. There may be some difficulty: this minimal example may turn out to be degenerate. This could be avoided if we manage to prove the stronger version of Conjecture~\ref{conjecture:gausssimplex}:

\begin{conjecture}
\label{conjecture:radialsimplex}
Let $\mu$ be a radially symmetric measure with monotone decreasing positive density $\rho(r)$.
The value $\mu(S)$ over all simplices $S$ containing a given ball $B_0(r)$ is minimized at the regular simplex with inscribed ball $B_0(r)$.
\end{conjecture}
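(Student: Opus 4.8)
The plan is to strip away the radial weight by slicing twice, reducing the whole statement to Conjecture~\ref{conjecture:caps}. First I would write $\mu$ as a superposition of uniform measures on centered balls. Since $\rho$ is positive and decreasing (and, as only bounded simplices matter for a minimum, we may assume $\rho(t)\to 0$; the case $\rho(\infty)>0$ only adds a multiple of $\vol$, handled the same way in the limit), there is a positive Borel measure $\nu$ on $(0,\infty)$ with $\rho(t)=\nu\big((t,\infty)\big)$, and Fubini gives
\[
\mu(S)=\int_S\rho(|x|)\,dx=\int_0^\infty \vol\big(S\cap B_0(s)\big)\,d\nu(s).
\]
So it suffices to show, for every $R>0$, that $\vol\big(S\cap B_0(R)\big)$ over simplices $S\supseteq B_0(r)$ is minimized at the regular circumscribed simplex $S_{\mathrm{reg}}$ (the one with inscribed ball $B_0(r)$); integrating against the positive measure $d\nu$ then yields $\mu(S)\ge\mu(S_{\mathrm{reg}})$. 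For $R\le r$ this is trivial ($S\cap B_0(R)=B_0(R)$), so fix $R>r$.

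Second, I would pass to complements and slice by distance to the origin. Writing $S=\bigcap_{i=1}^{n+1}\{x:\langle x,u_i\rangle\le d_i\}$ with $|u_i|=1$ and $d_i\ge r$ (this is the constraint $B_0(r)\subseteq S$), one has $B_0(R)\setminus S=\bigcup_i\big(B_0(R)\cap\{\langle x,u_i\rangle>d_i\}\big)$, hence
\[
\vol\big(B_0(R)\setminus S\big)=\int_r^R t^{n-1}\,\mathcal H^{n-1}\Big(\textstyle\bigcup_i\{\omega\in\mathbb S^{n-1}:\langle\omega,u_i\rangle>d_i/t\}\Big)\,dt,
\]
where the set under $\mathcal H^{n-1}$ is a union of spherical caps centered at the $u_i$ with angular radii $\arccos(d_i/t)$ (empty when $d_i\ge t$). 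Since $\vol(S\cap B_0(R))=\vol(B_0(R))-\vol(B_0(R)\setminus S)$, minimizing the left side means maximizing that integral. For any fixed directions $u_i$ the integrand grows as each $d_i$ shrinks, so the optimum has $d_i=r$ for all $i$; then all caps are congruent of angular radius $\arccos(r/t)$, and by Conjecture~\ref{conjecture:caps} the area of their union is maximized when $u_1,\dots,u_{n+1}$ form a regular simplex inscribed in $\mathbb S^{n-1}$ — the same configuration for every $t$. That configuration with $d_i=r$ is exactly $S_{\mathrm{reg}}$, which therefore minimizes $\vol(S\cap B_0(R))$ for every $R$, and the double integral finishes the proof.

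Two features of this route are worth stressing. It sidesteps the degeneracy difficulty that blocks a direct variational attack on Conjecture~\ref{conjecture:gausssimplex}: the inequality $\vol(S\cap B_0(R))\ge\vol(S_{\mathrm{reg}}\cap B_0(R))$ is established uniformly in $R$ and for \emph{every} generalized simplex (bounded or not) given by $n+1$ constraints and containing $B_0(r)$ — boundedness of $S$ is never used — so no limiting argument through degenerate simplices is needed. It also reproves the case $n=3$ outright, since Conjecture~\ref{conjecture:caps} holds on $\mathbb S^2$ by~\cite{lft1953}.

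The main obstacle is exactly the input: Conjecture~\ref{conjecture:caps} is open for $n\ge4$, and here one needs it for all cap radii in $(0,\pi/2)$. One could try to avoid it by a compactness-and-perturbation argument on the bounded problem ``minimize $\vol(S\cap B_0(R))$'': a genuine minimizer now exists, but identifying it with $S_{\mathrm{reg}}$ — ruling out irregular critical configurations of the facet normals — is again the caps problem, so I do not expect this to be any easier.
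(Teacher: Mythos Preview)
The statement you were given is Conjecture~\ref{conjecture:radialsimplex}, which the paper leaves open; there is no proof in the paper to compare against. What the paper does instead is prepare a \emph{variational} line of attack: Lemma~\ref{lemma:honestsimplex} guarantees that a minimizer is an honest simplex whenever $\int_0^\infty \rho(r)\,dr$ diverges, and Lemma~\ref{lemma:minimizers} shows that checking local minimizers for such densities would suffice. The paper does remark, immediately after Conjecture~\ref{conjecture:gausssimplex}, that slicing plus the Fejes~T\'oth result~\cite{lft1953} settles the case $n=3$, but it does not spell out the full reduction for general radial $\mu$.

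Your proposal is not a proof either, and you say so plainly: it is a reduction of Conjecture~\ref{conjecture:radialsimplex} to Conjecture~\ref{conjecture:caps}, which is open for $n\ge 4$. As a reduction it is correct. The layer decomposition $\rho(t)=\nu((t,\infty))$ and the Fubini step giving $\mu(S)=\int_0^\infty \vol(S\cap B_0(s))\,d\nu(s)$ are fine; the observation that one may push all $d_i$ down to $r$ because the cap union only grows is fine; and then each spherical slice is exactly an instance of Conjecture~\ref{conjecture:caps} with common cap radius $\arccos(r/t)$. The handling of $\rho(\infty)>0$ is slightly glib but salvageable: your own inequality $\vol(S\cap B_0(R))\ge \vol(S_{\mathrm{reg}}\cap B_0(R))$ for all $R$ gives $\vol(S)\ge \vol(S_{\mathrm{reg}})$ by monotone convergence, which is all that extra Lebesgue term needs.

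So the comparison is this: the paper's route (Lemmas~\ref{lemma:honestsimplex}--\ref{lemma:minimizers}) aims at a self-contained variational argument and buys compactness of the minimizer, but still has to identify that minimizer; your route is cleaner and handles unbounded generalized simplices for free, but trades the problem wholesale for Conjecture~\ref{conjecture:caps}. Neither closes the gap for $n\ge 4$.
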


This conjecture can be attacked by the analysis of the minimizer because of

\begin{lemma}
\label{lemma:honestsimplex}
If the integral $\int_0^{+\infty} \rho(r)\; dr$ diverges then the minimum in Conjecture~\ref{conjecture:radialsimplex} is attained at an honest simplex.
\end{lemma}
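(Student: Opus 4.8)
The plan is to exploit the divergence of $\int_0^{+\infty}\rho(r)\,dr$ to show that every \emph{unbounded} generalized simplex has infinite $\mu$-measure, and then to produce an honest minimizer by a compactness argument. First, the trivial case: if $\mu(B_0(r))=+\infty$ then every simplex containing $B_0(r)$ has infinite measure and there is nothing to prove, so I assume $\mu$ is finite on bounded sets (as it is for any reasonable monotone density); since $\rho$ is decreasing it follows that $\int_t^{+\infty}\rho(s)\,ds=+\infty$ for every $t\ge 0$. Fix once and for all an honest simplex $S_0\supseteq B_0(r)$ (e.g.\ a regular one circumscribed about the ball); it has finite measure, so the infimum $m$ of $\mu(S)$ over all generalized simplices $S\supseteq B_0(r)$ satisfies $m\le\mu(S_0)<+\infty$.

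The crux is the claim that an unbounded closed convex set with nonempty interior contains a half-cylinder of positive width. Indeed, being unbounded it contains a ray, so its recession cone contains some unit vector $e$; choosing $p\in\inte S$ and $\delta>0$ with $B_p(\delta)\subseteq S$, and using that $S+\mathbb R_{\ge0}e\subseteq S$, we get $B_p(\delta)+\mathbb R_{\ge0}e\subseteq S$. Writing this half-cylinder, after a rigid motion, as $\{(y,t):|y|\le\delta,\ t\ge0\}$ and using $\sqrt{|y|^2+t^2}\le\delta+t$ together with the monotonicity of $\rho$,
\[
\mu(S)\ \ge\ \int_0^{+\infty}\!\!\int_{|y|\le\delta}\rho\bigl(\sqrt{|y|^2+t^2}\bigr)\,dy\,dt
\ \ge\ \vol\bigl(B^{n-1}_0(\delta)\bigr)\int_0^{+\infty}\rho(\delta+t)\,dt\ =\ +\infty .
\]
Thus only honest simplices have finite $\mu$-measure (this disposes of both the essentially unbounded and the degenerate cases of the trichotomy used in the reduction proof above). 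In particular, if $S_k\supseteq B_0(r)$ with $\mu(S_k)\to m$, then $\mu(S_k)<+\infty$ for large $k$, so, discarding finitely many terms, every $S_k$ is an honest simplex with $\mu(S_k)\le\mu(S_0)$; write each as the convex hull of its $n+1$ vertices, $S_k=\conv\{v^{(k)}_0,\dots,v^{(k)}_n\}$.

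Next I claim the vertices stay bounded. If not, pass to a subsequence with $|v^{(k)}_{i_0}|=:L_k\to+\infty$ for some index $i_0$. Since $0\in B_0(r)\subseteq S_k$, the simplex contains $\conv\bigl(B_0(r)\cup\{v^{(k)}_{i_0}\}\bigr)$, and an elementary computation shows this ``ice-cream cone'' contains the cylinder $\{su+y:0\le s\le L_k/2,\ y\perp u,\ |y|\le r/2\}$ with $u=v^{(k)}_{i_0}/L_k$; estimating its measure as in the display, $\mu(S_k)\ge\vol\bigl(B^{n-1}_0(r/2)\bigr)\int_0^{L_k/2}\rho(s+r/2)\,ds\to+\infty$, contradicting $\mu(S_k)\le\mu(S_0)$. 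Hence all $S_k$ lie in a fixed ball $B_0(R')$. Passing to a further subsequence, $v^{(k)}_i\to v^*_i$, so $S_k\to S^*:=\conv\{v^*_0,\dots,v^*_n\}$ in the Hausdorff metric; $S^*$ contains $B_0(r)$ (a point lying in every $S_k$ lies in the closed set $S^*$), hence is full-dimensional, and a full-dimensional polytope with at most $n+1$ vertices is an $n$-simplex. Finally, all these bodies lie in $B_0(R')$, on which $\mu$ is a finite measure absolutely continuous with respect to Lebesgue measure, so Hausdorff convergence of the convex bodies forces $\mu(S_k)\to\mu(S^*)$; therefore $\mu(S^*)=m$, and the minimum is attained at an honest simplex.

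The main obstacle is the middle step: the fact that an unbounded generalized simplex must swallow a half-cylinder of positive width, together with the observation that $\int_0^{+\infty}\rho\,dr=+\infty$ then forces infinite $\mu$-measure. This is exactly where the hypothesis is used in an essential way --- for the Gaussian density one has $\int_0^{+\infty}e^{-r^2}\,dr<+\infty$, so degenerate simplices have finite measure and the argument, hence this route to ruling out degenerate minimizers, breaks down, precisely the difficulty flagged just before the statement. By comparison the compactness bookkeeping is routine; the only delicate point there is that the Hausdorff limit of the honest simplices is again an honest simplex, which the inscribed ball $B_0(r)$ guarantees.
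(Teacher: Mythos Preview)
Your argument is correct and its core is exactly the paper's: the paper's entire proof is the single sentence ``Obviously, degenerate simplices have infinite measure in this case.'' You supply what the paper leaves implicit --- the half-cylinder lower bound explaining \emph{why} unbounded simplices have infinite measure, and a compactness argument actually producing an honest minimizer --- so your write-up is considerably more complete while following the same route.

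One small technical caveat: the step ``$\mu$ finite on bounded sets and $\rho$ decreasing $\Rightarrow \int_t^{+\infty}\rho(s)\,ds=+\infty$'' is not literally valid in dimension $n\ge 3$, since local integrability of $\rho(|x|)$ on $\mathbb{R}^n$ only gives $\int_0^T\rho(s)s^{n-1}\,ds<\infty$. For instance $\rho(s)=1/s^2$ in $\mathbb{R}^3$ has $\mu$ locally finite and $\int_0^{+\infty}\rho=+\infty$, yet $\int_1^{+\infty}\rho<\infty$, and the degenerate simplex $\{x_1,x_2\ge 0,\ x_1+x_2\le 1,\ x_3\le 1\}$ then has finite $\mu$-measure. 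The paper's one-line proof glosses over the same point; the intended hypothesis is evidently that $\rho$ is locally integrable on $[0,\infty)$ (equivalently, bounded, since it is monotone decreasing), so that the divergence of $\int_0^{+\infty}\rho$ is at infinity. With that reading your proof is complete.
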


\begin{proof}
Obviously, degenerate simplices have infinite measure in this case.
\end{proof}

\begin{lemma}
\label{lemma:minimizers}
Let $S_0$ be the regular simplex outscribed about $B_0(r)$. If for any measure $\mu$, satisfying the assumptions of Conjecture~\ref{conjecture:radialsimplex}, and any non-degenerate local minimizer (among honest simplices) $S$ of $\mu(S)$ under the constraint $S\supset B_0(r)$ we have $\mu(S) \ge \mu(S_0)$ then Conjecture~\ref{conjecture:radialsimplex} holds.
\end{lemma}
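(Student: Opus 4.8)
The plan is to reduce Conjecture~\ref{conjecture:radialsimplex}, for an arbitrary measure $\mu$ satisfying its hypotheses, to the favourable situation of Lemma~\ref{lemma:honestsimplex}, in which no minimizer can be degenerate, and then to feed the minimizer directly into the assumption of the present lemma.

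\emph{Step 1 (the heavy-tailed case).} Suppose first $\int_0^{+\infty}\rho(r)\,dr=+\infty$. Then, as in the proof of Lemma~\ref{lemma:honestsimplex}, every degenerate generalized simplex has infinite $\mu$-measure, and so does every essentially unbounded one, because it contains a translate of a full-dimensional cone and $\int_1^{+\infty}\rho(r)r^{n-1}\,dr\ge\int_1^{+\infty}\rho(r)\,dr=+\infty$. Since $S_0$ is bounded we have $\mu(S_0)<+\infty$, so along a minimizing sequence $S_k\supset B_0(r)$ one may assume $\mu(S_k)\le\mu(S_0)$ and hence that every $S_k$ is honest. Encoding a generalized simplex by its $n+1$ pairs (outer unit normal, offset) --- normals ranging over the compact unit sphere, offsets over $[r,+\infty]$, the bound $\ge r$ coming from $B_0(r)\subseteq S_k$ and the value $+\infty$ meaning a facet pushed away --- the set of such simplices is sequentially compact for the metrics $\dist_R$, and on it $\mu$ is lower semicontinuous, since $S\mapsto\mu\big(S\cap B_0(R)\big)$ is continuous and $\mu(S)=\lim_{R\to\infty}\mu\big(S\cap B_0(R)\big)$. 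A subsequential limit $S_\infty$ thus has $\mu(S_\infty)\le\liminf_k\mu(S_k)<+\infty$, so $S_\infty$ is honest and attains the infimum. Being a global --- in particular local --- non-degenerate minimizer among honest simplices, it satisfies $\mu(S_\infty)\ge\mu(S_0)$ by hypothesis; together with $\mu(S_\infty)\le\mu(S_0)$ this yields $\mu(S_\infty)=\mu(S_0)$, i.e.\ Conjecture~\ref{conjecture:radialsimplex} for this $\mu$.

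\emph{Step 2 (the general case).} Given $\mu$ with density $\rho$ and a generalized simplex $S\supset B_0(r)$, set $\rho_\epsilon(r)=\rho(r)+\epsilon/(1+r)$; it is positive, monotone decreasing, and has divergent integral, so Step~1 applies to the measure $\mu_\epsilon$ with this density and gives $\mu_\epsilon(S')\ge\mu_\epsilon(S_0)$ for every generalized simplex $S'\supset B_0(r)$. If $S$ is bounded, then $\mu_\epsilon(S)=\mu(S)+\epsilon\int_S(1+|x|)^{-1}\,dx\to\mu(S)$ and likewise $\mu_\epsilon(S_0)\to\mu(S_0)$ as $\epsilon\downarrow0$, so $\mu(S)\ge\mu(S_0)$. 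If $\mu(S)=+\infty$ there is nothing to prove. The only remaining case is a degenerate $S$ with $\mu(S)<+\infty$ (an essentially unbounded $S$ of finite measure, which forces $\int_0^{+\infty}\rho(r)r^{n-1}\,dr<+\infty$, is the case already disposed of in the reduction of Conjecture~\ref{conjecture:simplex} to Conjecture~\ref{conjecture:gausssimplex}); as noted there, a degenerate simplex containing $B_0(r)$ is a $\dist_R$-limit of honest simplices $S_k\supset B_0(r)$ with $\mu(S_k)\to\mu(S)$, and applying the bounded case to each $S_k$ gives $\mu(S)=\lim_k\mu(S_k)\ge\mu(S_0)$.

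I expect the main obstacle to be the compactness/lower-semicontinuity bookkeeping in Step~1: one must make rigorous that a measure-bounded minimizing sequence of honest simplices containing $B_0(r)$ cannot escape to infinity. This is exactly where the divergence of $\int\rho$ (which turns every unbounded generalized simplex into one of infinite measure) and the lower semicontinuity of $\mu$ for the family $\dist_R$ are used, and it requires care about facets whose offsets tend to $+\infty$. A secondary technical point, only sketched in the reduction argument of the previous section, is the approximation of a finite-measure degenerate simplex by honest simplices with converging measure that still contain $B_0(r)$, obtained by tilting the ``open'' facets inward through a vanishing angle while slightly enlarging their offsets.
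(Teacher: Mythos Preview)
Your proof is correct and follows the same two-step scheme as the paper: first handle the divergent-integral case (where you supply the compactness and lower-semicontinuity argument that the paper leaves implicit in its one-line appeal to Lemma~\ref{lemma:honestsimplex}), then reduce the general case by modifying the density so that the integral diverges. The one genuine difference is in Step~2: the paper modifies $\rho$ \emph{locally}, leaving it unchanged on a large ball containing both $S_0$ and the approximating honest simplex $S'$, so that $\mu(S')$ and $\mu(S_0)$ are literally preserved and a direct contradiction with Step~1 follows; you instead perturb globally by $\epsilon/(1+r)$ and let $\epsilon\downarrow 0$. Both routes work; the paper's local modification is marginally slicker since it avoids the limiting argument and the separate treatment of degenerate~$S$ of finite measure in your final paragraph.
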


\begin{proof}
First, the assertion follows from Lemma~\ref{lemma:honestsimplex} if the integral $\int_0^{+\infty} \rho(r)\; dr$ diverges.

Let us consider the general case. Assume the contrary: suppose that $\mu(S)$ is minimized (over $S\supset B_0(r)$, of course) at a degenerate simplex $S$. If we have $\mu(S) < \mu(S_0)$, then we can approximate $S$ by an honest simplex $S'$ and still have $\mu(S') < \mu(S_0)$. Now we can change the density of $\mu$ so that it remains the same around $S_0$ and $S'$, and the integral $\int_0^{+\infty} \rho(r)\; dr$ diverges. But for the modified measure we have already shown that $\mu(S') \ge \mu(S_0)$.
\end{proof}

\section{Antipodal configurations in the plane}
\label{section:antipodal}

Let us focus on the case when the configuration is \emph{antipodal}, that is containing the vector $-v$ for every its vector $v$. Theorem~\ref{theorem:sidak} thus asserts that such a configuration becomes better if we make all the pairs $\pm v_i$ in it orthogonal to each other keeping their lengths.

But what about the lenghts? Let us analyze how the value $P(\pm v_1, \ldots, \pm v_N)$ behaves when the vectors $v_i$ are kept orthogonal to each other, but their lengths are allowed to vary. It would be nice if the maximization if $P(\pm v_1, \ldots, \pm v_N)$ for fixed $|v_1|^2+\dots + |v_N|^2$ happened at equal lengths $|v_1|= \dots = |v_N|$; but the example in~\cite{stein1994} is actually a counterexample to this naive conjecture. There only one pair of $\pm v_i$ was given the maximal possible length while all other pairs $\pm v_i$ were put to zero.

Fortunately, the naive conjecture holds in dimension 2:

\begin{theorem}
\label{theorem:energy-2d}
Under fixed $|v_1|^2 + |v_2|^2$ the maximum of $P(\pm v_1, \pm v_2)$ is given at equal and orthogonal to each other $v_1$ and $v_2$.
\end{theorem}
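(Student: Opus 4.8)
First I would invoke Theorem~\ref{theorem:sidak}: since $N=2\le n=2$, among antipodal configurations with prescribed $|v_1|,|v_2|$ the orthogonal one is optimal, so it suffices to treat $v_1=(a,0)$, $v_2=(0,b)$ with $a,b\ge 0$ and $a^2+b^2=2E$ fixed. Using $\min\big((t-c)^2,(t+c)^2\big)=(|t|-c)^2$, the maximum over $\pm v_1,\pm v_2$ separates the two coordinates and
\[
P(a,b)=\int_{\mathbb R^2}\max\big(f_1(x),f_2(x)\big)\,dx,\quad f_1=e^{-(|x_1|-a)^2-x_2^2},\ f_2=e^{-x_1^2-(|x_2|-b)^2}.
\]
Parametrizing the constraint circle by $a=\sqrt{2E}\cos\theta$, $b=\sqrt{2E}\sin\theta$ and writing $p(\theta)=P(a,b)$, the symmetry $P(a,b)=P(b,a)$ makes $p$ symmetric about $\theta=\pi/4$; so the whole statement reduces to showing $p'(\theta)>0$ on $(0,\pi/4)$, i.e.\ for $a>b>0$ (then $\theta=\pi/4$, which is $|v_1|=|v_2|$, is the unique maximum, beating the endpoints $\theta=0,\pi/2$).

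Next I would differentiate under the integral sign. The integrand is continuous in all variables and smooth off the null set $\{f_1=f_2\}$, with $\partial_af_1=2(|x_1|-a)f_1$ and $\partial_bf_2=2(|x_2|-b)f_2$; since $\max$ is continuous no boundary term arises, and one gets $\partial_aP=2J_1$, $\partial_bP=2J_2$ with $J_1=\int_{\{f_1\ge f_2\}}(|x_1|-a)f_1$ and $J_2=\int_{\{f_2\ge f_1\}}(|x_2|-b)f_2$. Hence $p'(\theta)=a\,\partial_bP-b\,\partial_aP=2(aJ_2-bJ_1)$, so I need $aJ_2>bJ_1$ when $a>b$.

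To evaluate $J_1,J_2$ I would exploit the product form: by the reflection symmetries restrict to the first quadrant, where $\{f_1\ge f_2\}$ is a half-line $\{x_1\ge c(x_2)\}$ with $c(x_2)=\tfrac{a^2-b^2}{2a}+\tfrac ba x_2$ (positive for all $x_2\ge0$ when $a>b$); since $(x_1-a)e^{-(x_1-a)^2}=-\tfrac12\partial_{x_1}e^{-(x_1-a)^2}$, the inner $x_1$-integral is exactly $\tfrac12e^{-(c(x_2)-a)^2}$, and using $a^2+b^2=2E$ one has $c(x_2)-a=\tfrac{bx_2-E}{a}$ and $\tfrac{(bx-E)^2}{a^2}+x^2=\tfrac{2E}{a^2}\big(x-\tfrac b2\big)^2+\tfrac E2$, collapsing $J_1$ to a half-line Gaussian integral. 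The same move applied to $J_2$ splits it into a term supported where the winning region meets the $x_1$-axis (namely $\{x_1\le\tfrac{a^2-b^2}{2a}\}$) plus a half-line Gaussian integral whose lower endpoint turns out to coincide with the one appearing in $J_1$.

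The crux, and the step I expect to require the most care, is checking that these pieces combine cleanly. With $G(u)=\int_0^ue^{-t^2}\,dt$, I anticipate
\[
bJ_1=\tfrac{2ab\,e^{-E/2}}{\sqrt{2E}}\big(\tfrac{\sqrt\pi}2+G(\tfrac{b\sqrt{2E}}{2a})\big),\qquad aJ_2=2a\,e^{-b^2}G(\tfrac{a^2-b^2}{2a})+\tfrac{2ab\,e^{-E/2}}{\sqrt{2E}}\big(\tfrac{\sqrt\pi}2+G(\tfrac{b\sqrt{2E}}{2a})\big),
\]
so the bulky terms cancel and $p'(\theta)=2(aJ_2-bJ_1)=4a\,e^{-b^2}G\!\big(\tfrac{a^2-b^2}{2a}\big)$. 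As $G$ is odd and strictly positive on $(0,\infty)$, this is positive for $a>b$, negative for $a<b$ and zero at $a=b$, so $p$ increases up to $\theta=\pi/4$ and decreases afterwards, giving the maximum at $|v_1|=|v_2|$. The two things one must get exactly right are: tracking how $\{f_1\ge f_2\}$ meets the coordinate axes (the term structures of $J_1$ and $J_2$ are not interchangeable once $a\ne b$), and verifying that the two completed-square substitutions really do land the leftover tails on the same error function, making the cancellation exact. (A less efficient route via $\int\max=\int f_1+\int f_2-\int\min(f_1,f_2)$ is also available — there $\int f_1+\int f_2=2\pi+2\sqrt\pi(G(a)+G(b))$ is easily maximized at $a=b$ by concavity of $s\mapsto G(\sqrt s)$ — but that decomposition hides the cancellation and forces a comparison of two competing terms whose leading asymptotics coincide, which is what makes the differentiated form above preferable.)
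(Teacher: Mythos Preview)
Your proposal is correct and the computation goes through exactly as you anticipate: the two completed-square substitutions do land on the same lower limit $-\tfrac{b\sqrt{2E}}{2a}$, the bulky error-function terms cancel, and one is left with $p'(\theta)=4a\,e^{-b^2}G\!\big(\tfrac{a^2-b^2}{2a}\big)$, which has the right sign. The one place to be slightly more careful in a write-up is the justification of differentiating $\int\max(f_1,f_2)$: the clean way is to note that $a\mapsto\max(f_1,f_2)$ is locally Lipschitz with a.e.\ derivative $\partial_af_1\cdot\mathbf{1}_{\{f_1>f_2\}}$ dominated by $2(|x_1|+a)e^{-(|x_1|-a)^2-x_2^2}\in L^1$, so dominated convergence applies.

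Your route is genuinely different from the paper's. The paper argues geometrically: it translates each Voronoi region so that its site sits at the origin, counts multiplicities in the resulting overlay, and finds that $P$ equals (measure of the plane) $+$ (measure of two strips of width $c=\sqrt{a^2+b^2}$) $+$ (measure of a certain centrally symmetric hexagon). The first two pieces depend only on $c$, so the problem reduces to maximizing the measure of the hexagon; a short synthetic argument pins down the hexagon's shape, and then a first-variation computation shows that rotating the four ``tangent'' sides contributes nothing while pushing the remaining two sides outward (which amounts to increasing the smaller of $a,b$) strictly increases the measure. The payoff of the paper's approach is generality: nothing in it uses the explicit Gaussian density, so the result holds for any radially symmetric measure (as the paper remarks). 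Your approach, by contrast, leans on the Gaussian form to collapse the integrals to error functions; what you gain is a completely explicit closed-form derivative and a proof that avoids the Voronoi/hexagon picture entirely.
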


\begin{remark}
It will be clear from the proof that the conclusion remains true if consider, instead of the Gaussian measure, any measure with radially symmetric density.
\end{remark}

\begin{proof}
Of course, the picture is essentially planar. Let $a$ and $b$ be the lengths of $v_1$ and $v_2$ respectively, and let $a\leq b$ without loss of generality. Consider the Voronoi regions of the the four points in the plane. Let us move every Voronoi regions so that the center of it gets to the origin, see Figure~\ref{fig:movement}.

\begin{figure}[h]
\caption{Moving the Voronoi regions}
\label{fig:movement}
\begin{center}
\includegraphics{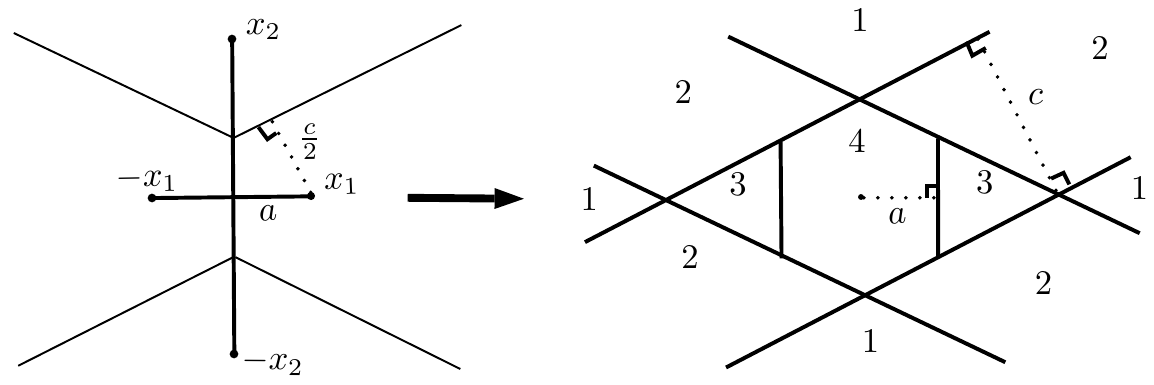}
\end{center}
\end{figure}

The numbers in the picture show how many times every area gets covered after the overlap of the moved Voronoi regions. Now all the measures in each of the Voronoi regions become the same Gaussian measure centered at the origin and we count it taking the overlap into account. This turns out to be the measure of the whole plane, plus two centrally symmetric strips of width $c = \sqrt{a^2 + b^2}$ each, plus the measure of the hexagon in the picture. Since the width of the strips $c$ does not depend on the choice of $a$ and $b$, their measure is also constant in fact.

Hence, for fixed $a^2 + b^2$, we maximize $P(\pm v_1, \pm v_2)$ if and only if we maximize the Gaussian measure of the hexagon. Let us look at the hexagon closer: It is obtained from the rhombus, which is the intersection of two strips of width $c$, by cutting off two corners. Let us give a geometric description of the cutting: Let  $O$ be the center of the hexagon and let $A, B, C$ (see Figure~\ref{fig:hexagon}) be its vertices. Since every two points of the configuration are symmetric with respect to the wall between their respective Voronoi regions, the point $N$, defined as symmetric to $O$ about $BC$ is on the straight line $AB$. Let $M$ be the base of the perpendicular from $O$ to $BC$.

\begin{figure}[h]
\caption{The hexagon}
\label{fig:hexagon}
\begin{center}
\includegraphics{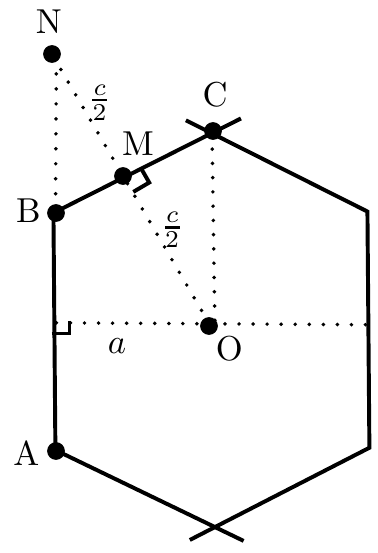}
\end{center}
\end{figure}

Since $AB$ and $OC$ are parallel, we obtain $\angle BNM = \angle MOC$. As was mentioned, $OM = MN$, $\angle BMN = \angle CMO$. Hence the triangles $\triangle BMN$ and $\triangle CMO$ are equal. Therefore $OM$ is the perpendicular bisector of $BC$ and $OB = CO$.

We conclude that the hexagon is characterized by the following properties: All its vertices are at equal distances from the origin; two antipodal pairs of its sides are at distance $\frac{c}{2}$ from the origin. In other words, the four sides touch the circle of radius  $\frac{c}{2}$ centered at $O$ at their respective midpoints.

Let us fix a direction in the plane and parameterize the hexagon by six parameters: Four angles for the sides that are $\frac{c}{2}$ from the origin and two shifts along the given direction for the remaining two sides, see Figure~\ref{fig:parametrization}.

\begin{figure}[h]
\caption{Hexagon's parameterization}
\label{fig:parametrization}
\begin{center}
\includegraphics{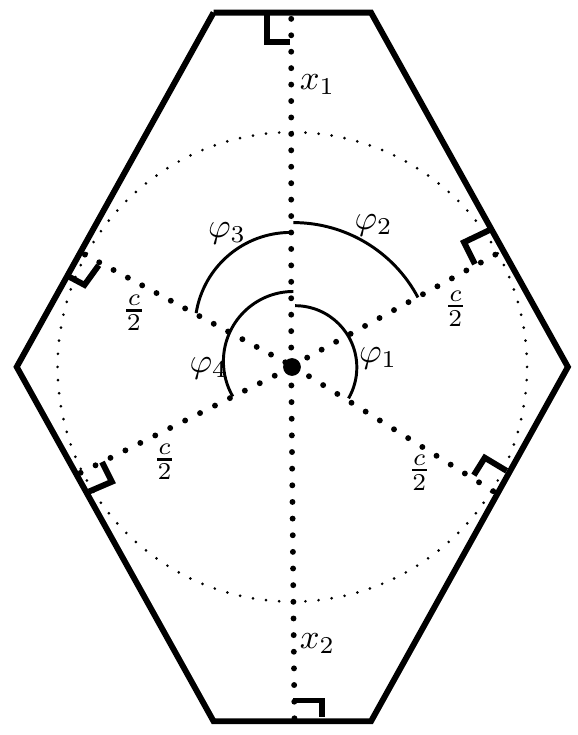}
\end{center}
\end{figure}

Let $F(\varphi_1, \varphi_2, \dots, x_2)$ be the Gaussian measure of such a hexagon; the center of the measure is also $O$.

When we change $a$ and $b$ keeping $c = \sqrt{a^2 + b^2}$ the hexagon vary. We may assume that the six parameters of the hexagon are all functions of $a$:
$$
F(a) = F(\varphi_1(a), \varphi_2(a), \dots, x_2(a)).
$$
Let us find the derivative:
$$
F'_a(a) = \sum\frac{\partial}{\partial \varphi_i}F(\varphi_1, \varphi_2, \dots, x_2)\varphi_i'(a) + \sum\frac{\partial}{\partial x_j}F(\varphi_1, \varphi_2, \dots, x_2)x_j'(a).
$$

\begin{lemma}
If the parameters correspond the hexagon in question (are expressed in $a$) then
$$
\forall i \in \{1,2,3,4\} \forall a\,\,\,\frac{\partial}{ \varphi_i} F(\varphi_1, \varphi_2, \dots, x_2) = 0.
$$
\end{lemma}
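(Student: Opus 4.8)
The plan is to show that the first variation of the Gaussian measure $F$ of the hexagon with respect to rotating one of the four sides tangent to the circle $S$ of radius $c/2$ is an integral of an odd function over a symmetric interval, and hence vanishes. Write the hexagon as $\bigcap_{k=1}^{6} H_k$, where $H_1,\dots,H_4$ are the half-planes bounded by the four lines tangent to $S$ (parameterized by the angles $\varphi_1,\dots,\varphi_4$), and $H_5,H_6$ are the half-planes cut off by the remaining two sides (parameterized by $x_1,x_2$). For $i\in\{1,2,3,4\}$ let the bounding line of $H_i$ be $\ell_i=\{x:x\cdot n_i=c/2\}$ with unit normal $n_i=n_i(\varphi_i)$, let $T_i=(c/2)n_i\in S$ be the tangent point, and let $e_i=\ell_i\cap\bigcap_{k\neq i}H_k$ be the corresponding edge of the hexagon. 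By the structural description established above, $T_i$ is the \emph{midpoint} of $e_i$, so writing $u_i$ for the unit direction of $\ell_i$ we have $e_i=\{\,T_i+s\,u_i:-L_i\le s\le L_i\,\}$ for some $L_i>0$.

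First I would write down the first variation of $F$ with respect to $\varphi_i$ alone. Since only the line $\ell_i$ moves while the other five sides are held fixed, the only part of $\partial\big(\bigcap_k H_k\big)$ that moves is the edge $e_i$, and the first-variation formula for a region with moving boundary gives
\[
\frac{\partial F}{\partial\varphi_i}\;=\;\int_{e_i}V_n(p)\,\rho(|p|)\;d\ell(p),
\]
where $V_n$ is the (outward) normal velocity of $\ell_i$ and $\rho$ is the radial density. Differentiating the relation $p\cdot n_i(\varphi_i)=c/2$ for a point moving with the line and choosing its motion normal to $\ell_i$, one gets $V_n(p)=-\,p\cdot n_i'(\varphi_i)=-\,p\cdot u_i$, because $n_i'(\varphi_i)$ is the unit tangent to $S$ at $T_i$, i.e.\ the direction $u_i$ of $\ell_i$. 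Parameterizing $e_i$ by $p(s)=T_i+s\,u_i$ and using $T_i\cdot u_i=0$ yields $V_n(p(s))=-s$, hence
\[
\frac{\partial F}{\partial\varphi_i}\;=\;-\int_{-L_i}^{L_i}s\,\rho\big(|p(s)|\big)\;ds.
\]

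The cancellation is now immediate from radial symmetry. Because $OT_i\perp e_i$, Pythagoras gives $|p(s)|^2=|T_i|^2+s^2=(c/2)^2+s^2$, so $\rho(|p(s)|)=\rho\big(\sqrt{(c/2)^2+s^2}\,\big)$ is an \emph{even} function of $s$; therefore $s\,\rho(|p(s)|)$ is odd, and since $T_i$ is the midpoint of $e_i$ the interval $[-L_i,L_i]$ is symmetric. Hence the integral vanishes, which is the assertion. The same computation uses only that $\rho$ is radial, which is the content of the remark following Theorem~\ref{theorem:energy-2d}.

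The main point requiring care is the justification of the first-variation formula for the piecewise-linear region $\bigcap_k H_k$: when $\ell_i$ rotates, the two endpoints of $e_i$ slide along the fixed neighbouring edges, but these vertex motions contribute only at second order, so the boundary-integral formula applies as stated. Equivalently, one may argue elementarily that as $\varphi_i$ changes the hexagon gains a thin triangular sliver with apex near $T_i$ on one side of $T_i$ and loses one (congruent up to position) on the other side, and pass to the limit, again arriving at $\int_{-L_i}^{L_i}s\,\rho\,ds$. One should also exclude parameter values where some $e_i$ degenerates to a point or the hexagon collapses; for the hexagon genuinely arising from a configuration with $a<b$ this does not occur, and the boundary case $a=b$ (a square) follows by continuity.
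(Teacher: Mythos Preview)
Your proof is correct and follows essentially the same approach as the paper: both compute the first variation of the Gaussian measure as an integral of (normal velocity)$\times$(density) over the moving edge, use that the tangency point $T_i$ is the midpoint of the edge (established earlier), and then observe that for points placed symmetrically about $T_i$ the densities agree (radial symmetry plus $|p(s)|^2=(c/2)^2+s^2$) while the normal velocities are opposite, so the integral vanishes. The paper phrases the motion as a rigid rotation about $O$ (velocity $\omega|x|$ perpendicular to $\overrightarrow{Ox}$), whereas you compute the normal velocity directly from differentiating $p\cdot n_i(\varphi_i)=c/2$; these yield the same $V_n(p(s))=-s$, and your treatment is if anything more explicit, including the remark that vertex motions contribute only at second order.
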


\begin{proof} When we change the angle $\varphi_i$ the corresponding side is rotated about the origin keeping in touch with the circle of radius $c$. Since the vertices of the hexagon are at the same distance from the origin and the Gaussian density is radially symmetric, then the mass center of the side (in this Gaussian density) is in its midpoint, which is the same as the touching with the circle.

If the touching point is rotated, say, with angular velocity $\omega$ then at start the velocity at a point $x$ of the side segment equals to $\omega|x|$ and is directed along $\overrightarrow{Ox}$ rotated by $\frac{\pi}{2}$. If we consider two such points symmetric to each other with respect to the midpoint of the side then we see that the densities are the same at those two points and the projections of their velocities onto the normal of the side sum to zero. Since in the linear term the measure changes by the integral over the side segment of the density multiplied by the normal component of the velocity, the total derivative of the measure with respect to the rotation turns out to be zero.
\end{proof}

Now we see that the partial derivatives of $F(\varphi_1, \varphi_2, \dots, x_2)$ in the angles are zero, and its partial derivatives in $x_1$ and $x_2$ are definitely non-negative and positive for $a < b$. Hence the measure increases when $x_1$ and $x_2$ increase. Since $x_1(a) = x_2(a) = a$ we have to increase $a$ until it becomes equal to $b$ (at this moment the picture changes). So $a = b$ is the optimal configuration.
\end{proof}

\section{Numerical results for antipodal configurations}
\label{section:antipodal-numerical}

\subsection{Formulas for the modified example of Steiner}

In Steiner's example~\cite{stein1994} one pair of antipodal vectors had nonzero length while the other pairs had length zero, that is all those vectors were the same at the origin. Let us generalize this as follows: $k$ pairs of vectors have the same length, while all other vectors are in the origin and their set is not empty. Let us write down an explicit formula for the probability in this case.

In order to calculate the function $P$ for the configuration we have to take every point in the configuration and its Voronoi region, integrate the Gaussian measure centered at this point over its Voronoi region, and then sum up the results over all the points.

The linear hull of our point is $k$-dimensional and their Voronoi regions in the ambient space are orthogonal products of $k$-dimensional Voronoi regions by the complementary linear subspace. The Gaussian measures also equal to the products of $k$-dimensional Gaussian measures by the Gaussian measure of the complement, which is $1$. Therefore it is sufficient to work in the $k$-dimensional linear hull of the points.

Now choose the coordinate frame so that our nonzero vectors are $\pm$ the basis vectors multiplied by $a$. Of course, it does not matter how many points of the configuration are put to the origin; so we assume there is one point at the origin. The Voronoi region of the origin is therefore the cube $[-\frac{a}{2}, \frac{a}{2}]^k$. Other Voronoi regions are the cones on the facets of the cube minus the cube itself.

Here we start to use the standard version of the Gaussian measure with density $\frac{1}{\sqrt{2\pi}} e^{-x^2/2}$ per dimension. This is needed to invoke the standard notation
$$
\Phi(x) = \frac{1}{\sqrt{2\pi}} \int_0^x e^{-t^2/2}\; dt.
$$
So the cube $[-\frac{a}{2}, \frac{a}{2}]^k$ has the Gaussian measure
$$
\left(\Phi(a/2) - \Phi(-a/2)\right)^n = (2\Phi(a/2) - 1)^k.
$$

Now consider the Voronoi region which is adjacent to the cube by its facet $x_1 = \frac{a}{2}$. When we intersect this region by the hyperplane $x_1 = b,\, b \geq \frac{a}{2}$, we obtain the cube $[-b, b]^{k-1}$in this hyperplane. Now we have to integrate the induced Gaussian measure of this cube from $x_1=a$ to $+\infty$. The induced Gaussian measure has center at the center of the cube and the additional factor
$$
\frac{1}{\sqrt{2\pi}}e^{-\frac{\rho^2}{2}},
$$
where $\rho$ is the distance from the center of the original Gaussian measure to the hyperplane. Since the center is at the axis point with $x_1 = a$, the induced measure of the section is
$$
\frac{1}{\sqrt{2\pi}}e^{-\frac{(b - a)^2}{2}}(2\Phi(b) - 1)^{k-1}.
$$

Eventually, the Gaussian measure of the Voronoi region is
$$
\int\limits_{\frac{a}{2}}^{+\infty}\frac{1}{\sqrt{2\pi}}e^{-\frac{(b - a)^2}{2}}(2\Phi(b) - 1)^{k-1}\; db.
$$

And the total value is
$$
P(\pm ae_1,\ldots,\pm ae_k, 0) =
2k\int\limits_{ \frac{a}{2}}^{+\infty}\frac{1}{\sqrt{2\pi}}e^{-\frac{(b - a)^2}{2}}(2\Phi(b) - 1)^{k-1}\; db + (2\Phi(a/2) - 1)^k.
$$

This formula is not very nice, but it allows us to make some numerical experiments.

\subsection{Numerical experiments with the modified example of Steiner}

We give the table where the function $P$ of the modified example of Steiner is calculated for different values of $k$ (the number of nonzero pairs) and the total energy $E$ in Table~\ref{tab:Steiner example}.

\begin{table}
\caption{Numerical results of the modified example of Steiner, $k$ increases from left to right}
\label{tab:Steiner example}
\begin{center}

\resizebox{\linewidth}{!}{
\begin{tabular}{c|cccccccccccccccccccc}

 &1.000 &2.000 &3.000 &4.000 &5.000 &6.000 &7.000 &8.000 &9.000 &10.000 &11.000 &12.000 &13.000 &14.000 &15.000 &16.000 &17.000 &18.000 &19.000 &20.000 \\
\hline
0.100 &1.178 &1.186 &1.180 &1.173 &1.166 &1.160 &1.154 &1.149 &1.144 &1.140 &1.136 &1.133 &1.130 &1.127 &1.124 &1.122 &1.119 &1.117 &1.115 &1.113 \\
0.200 &1.251 &1.267 &1.260 &1.250 &1.240 &1.231 &1.223 &1.215 &1.209 &1.202 &1.197 &1.192 &1.187 &1.183 &1.179 &1.176 &1.172 &1.169 &1.166 &1.163 \\
0.300 &1.307 &1.331 &1.324 &1.311 &1.299 &1.288 &1.277 &1.268 &1.260 &1.252 &1.245 &1.239 &1.233 &1.228 &1.223 &1.218 &1.214 &1.210 &1.206 &1.203 \\
0.400 &1.354 &1.385 &1.378 &1.365 &1.350 &1.337 &1.325 &1.314 &1.304 &1.295 &1.287 &1.280 &1.273 &1.267 &1.261 &1.255 &1.250 &1.246 &1.241 &1.237 \\
0.500 &1.395 &1.434 &1.428 &1.413 &1.396 &1.381 &1.368 &1.355 &1.344 &1.334 &1.325 &1.316 &1.309 &1.302 &1.295 &1.289 &1.283 &1.278 &1.273 &1.268 \\
0.600 &1.432 &1.479 &1.473 &1.457 &1.439 &1.423 &1.407 &1.394 &1.381 &1.370 &1.360 &1.350 &1.342 &1.334 &1.326 &1.320 &1.313 &1.307 &1.302 &1.296 \\
0.700 &1.465 &1.520 &1.515 &1.498 &1.479 &1.461 &1.445 &1.430 &1.416 &1.404 &1.392 &1.382 &1.373 &1.364 &1.356 &1.348 &1.341 &1.335 &1.329 &1.323 \\
0.800 &1.496 &1.559 &1.555 &1.537 &1.517 &1.498 &1.480 &1.464 &1.449 &1.435 &1.423 &1.412 &1.402 &1.393 &1.384 &1.376 &1.368 &1.361 &1.354 &1.348 \\
0.900 &1.525 &1.596 &1.593 &1.575 &1.553 &1.533 &1.513 &1.496 &1.480 &1.466 &1.453 &1.441 &1.430 &1.420 &1.410 &1.402 &1.394 &1.386 &1.379 &1.372 \\
1.000 &1.553 &1.631 &1.630 &1.610 &1.588 &1.566 &1.546 &1.527 &1.510 &1.495 &1.481 &1.469 &1.457 &1.446 &1.436 &1.427 &1.418 &1.410 &1.402 &1.395 \\
1.500 &1.670 &1.786 &1.793 &1.773 &1.746 &1.719 &1.693 &1.670 &1.648 &1.629 &1.611 &1.594 &1.579 &1.565 &1.552 &1.540 &1.529 &1.518 &1.509 &1.499 \\
2.000 &1.766 &1.919 &1.936 &1.916 &1.886 &1.855 &1.825 &1.797 &1.771 &1.748 &1.726 &1.706 &1.688 &1.671 &1.656 &1.641 &1.628 &1.615 &1.603 &1.592 \\
2.500 &1.848 &2.037 &2.066 &2.048 &2.015 &1.980 &1.946 &1.914 &1.885 &1.858 &1.833 &1.810 &1.789 &1.770 &1.752 &1.735 &1.719 &1.705 &1.691 &1.678 \\
3.000 &1.919 &2.144 &2.186 &2.170 &2.136 &2.098 &2.061 &2.025 &1.992 &1.962 &1.934 &1.908 &1.884 &1.862 &1.842 &1.823 &1.805 &1.789 &1.773 &1.758 \\
3.500 &1.983 &2.243 &2.298 &2.286 &2.251 &2.211 &2.170 &2.131 &2.095 &2.061 &2.030 &2.002 &1.975 &1.951 &1.928 &1.907 &1.887 &1.869 &1.852 &1.835 \\
4.000 &2.041 &2.334 &2.404 &2.396 &2.361 &2.318 &2.275 &2.233 &2.193 &2.157 &2.123 &2.092 &2.063 &2.036 &2.011 &1.988 &1.966 &1.946 &1.927 &1.909 \\
4.500 &2.093 &2.420 &2.505 &2.502 &2.467 &2.422 &2.376 &2.331 &2.289 &2.249 &2.213 &2.179 &2.148 &2.119 &2.092 &2.066 &2.043 &2.021 &2.000 &1.981 \\
5.000 &2.142 &2.501 &2.601 &2.603 &2.569 &2.523 &2.474 &2.427 &2.382 &2.339 &2.300 &2.264 &2.230 &2.199 &2.170 &2.143 &2.117 &2.093 &2.071 &2.050 \\
10.000 &2.473 &3.125 &3.392 &3.473 &3.466 &3.420 &3.359 &3.292 &3.225 &3.160 &3.098 &3.039 &2.985 &2.933 &2.885 &2.840 &2.797 &2.758 &2.720 &2.685 \\
15.000 &2.658 &3.548 &3.988 &4.172 &4.219 &4.195 &4.137 &4.063 &3.983 &3.902 &3.823 &3.747 &3.674 &3.605 &3.540 &3.479 &3.421 &3.367 &3.315 &3.266 \\
20.000 &2.772 &3.857 &4.462 &4.761 &4.877 &4.891 &4.849 &4.778 &4.693 &4.602 &4.510 &4.420 &4.332 &4.249 &4.169 &4.093 &4.021 &3.952 &3.888 &3.826 \\
25.000 &2.846 &4.090 &4.847 &5.265 &5.462 &5.525 &5.510 &5.451 &5.367 &5.272 &5.172 &5.071 &4.971 &4.874 &4.781 &4.692 &4.607 &4.525 &4.448 &4.375 \\
30.000 &2.894 &4.269 &5.165 &5.700 &5.984 &6.105 &6.126 &6.086 &6.011 &5.917 &5.812 &5.704 &5.595 &5.487 &5.382 &5.281 &5.184 &5.091 &5.002 &4.917 \\
35.000 &2.927 &4.409 &5.429 &6.077 &6.451 &6.638 &6.702 &6.689 &6.628 &6.539 &6.434 &6.322 &6.206 &6.089 &5.975 &5.863 &5.755 &5.651 &5.551 &5.455 \\
40.000 &2.949 &4.520 &5.650 &6.407 &6.872 &7.128 &7.240 &7.259 &7.218 &7.140 &7.039 &6.925 &6.805 &6.682 &6.559 &6.439 &6.321 &6.207 &6.097 &5.991 \\
45.000 &2.965 &4.608 &5.837 &6.695 &7.250 &7.578 &7.744 &7.801 &7.784 &7.720 &7.627 &7.515 &7.393 &7.266 &7.137 &7.009 &6.882 &6.759 &6.640 &6.524 \\
50.000 &2.975 &4.679 &5.995 &6.947 &7.591 &7.992 &8.216 &8.314 &8.326 &8.281 &8.198 &8.091 &7.970 &7.841 &7.707 &7.573 &7.440 &7.308 &7.180 &7.056 \\
55.000 &2.983 &4.736 &6.129 &7.170 &7.898 &8.374 &8.657 &8.800 &8.844 &8.821 &8.753 &8.654 &8.536 &8.407 &8.271 &8.132 &7.992 &7.854 &7.719 &7.586 \\
60.000 &2.988 &4.782 &6.244 &7.365 &8.175 &8.724 &9.069 &9.260 &9.340 &9.343 &9.291 &9.204 &9.091 &8.964 &8.827 &8.685 &8.541 &8.397 &8.255 &8.115 \\
65.000 &2.991 &4.820 &6.342 &7.538 &8.426 &9.047 &9.454 &9.695 &9.814 &9.845 &9.814 &9.739 &9.635 &9.512 &9.375 &9.232 &9.085 &8.936 &8.788 &8.642 \\
70.000 &2.994 &4.851 &6.426 &7.691 &8.652 &9.344 &9.813 &10.107 &10.267 &10.329 &10.320 &10.261 &10.168 &10.050 &9.916 &9.773 &9.623 &9.471 &9.319 &9.167 \\
75.000 &2.996 &4.876 &6.499 &7.826 &8.857 &9.618 &10.149 &10.495 &10.698 &10.794 &10.810 &10.770 &10.688 &10.579 &10.449 &10.307 &10.157 &10.003 &9.846 &9.690 \\
80.000 &2.997 &4.897 &6.562 &7.946 &9.042 &9.870 &10.463 &10.863 &11.110 &11.241 &11.285 &11.265 &11.198 &11.098 &10.974 &10.835 &10.685 &10.529 &10.370 &10.210 \\
85.000 &2.998 &4.914 &6.616 &8.052 &9.210 &10.102 &10.755 &11.209 &11.503 &11.671 &11.744 &11.746 &11.695 &11.606 &11.491 &11.356 &11.208 &11.052 &10.891 &10.727 \\
90.000 &2.998 &4.928 &6.663 &8.147 &9.363 &10.315 &11.028 &11.536 &11.876 &12.083 &12.187 &12.213 &12.181 &12.105 &11.999 &11.869 &11.725 &11.569 &11.408 &11.242 \\
95.000 &2.999 &4.940 &6.704 &8.232 &9.501 &10.512 &11.283 &11.845 &12.232 &12.479 &12.615 &12.667 &12.654 &12.594 &12.498 &12.376 &12.235 &12.082 &11.920 &11.753 \\
100.000 &2.999 &4.950 &6.740 &8.307 &9.627 &10.693 &11.521 &12.136 &12.570 &12.858 &13.028 &13.107 &13.116 &13.072 &12.988 &12.874 &12.739 &12.589 &12.429 &12.261 \\
120.000 &3.000 &4.975 &6.842 &8.538 &10.025 &11.287 &12.322 &13.140 &13.765 &14.221 &14.537 &14.737 &14.845 &14.880 &14.857 &14.790 &14.689 &14.561 &14.415 &14.254 \\
140.000 &3.000 &4.988 &6.903 &8.687 &10.301 &11.719 &12.929 &13.931 &14.736 &15.362 &15.831 &16.165 &16.389 &16.520 &16.576 &16.573 &16.522 &16.434 &16.316 &16.176 \\
160.000 &3.000 &4.994 &6.940 &8.786 &10.494 &12.035 &13.391 &14.553 &15.523 &16.311 &16.933 &17.408 &17.756 &17.996 &18.145 &18.220 &18.234 &18.198 &18.123 &18.015 \\
180.000 &3.000 &4.997 &6.963 &8.853 &10.630 &12.268 &13.743 &15.042 &16.159 &17.098 &17.868 &18.483 &18.960 &19.316 &19.568 &19.732 &19.822 &19.851 &19.828 &19.764 \\
200.000 &3.000 &4.998 &6.976 &8.898 &10.728 &12.441 &14.013 &15.427 &16.674 &17.749 &18.658 &19.409 &20.014 &20.490 &20.850 &21.111 &21.287 &21.389 &21.430 &21.420 \\
300.000 &3.000 &5.000 &6.998 &8.982 &10.937 &12.843 &14.685 &16.444 &18.106 &19.657 &21.085 &22.384 &23.548 &24.578 &25.477 &26.249 &26.904 &27.448 &27.893 &28.246 \\
400.000 &3.000 &5.000 &7.000 &8.997 &10.984 &12.952 &14.890 &16.787 &18.632 &20.412 &22.117 &23.737 &25.261 &26.682 &27.994 &29.195 &30.282 &31.258 &32.124 &32.885 \\
500.000 &3.000 &5.000 &7.000 &8.999 &10.996 &12.985 &14.960 &16.914 &18.840 &20.729 &22.574 &24.365 &26.095 &27.754 &29.335 &30.831 &32.236 &33.548 &34.761 &35.877 \\
600.000 &3.000 &5.000 &7.000 &9.000 &10.999 &12.995 &14.985 &16.964 &18.928 &20.870 &22.786 &24.669 &26.513 &28.310 &30.054 &31.738 &33.356 &34.901 &36.369 &37.756 \\
700.000 &3.000 &5.000 &7.000 &9.000 &11.000 &12.998 &14.994 &16.985 &18.967 &20.936 &22.890 &24.822 &26.730 &28.607 &30.449 &32.249 &34.002 &35.703 &37.346 &38.926 \\
800.000 &3.000 &5.000 &7.000 &9.000 &11.000 &12.999 &14.998 &16.993 &18.984 &20.968 &22.942 &24.903 &26.847 &28.771 &30.671 &32.543 &34.382 &36.184 &37.943 &39.655 \\
900.000 &3.000 &5.000 &7.000 &9.000 &11.000 &13.000 &14.999 &16.997 &18.993 &20.984 &22.969 &24.946 &26.912 &28.864 &30.800 &32.717 &34.611 &36.477 &38.314 &40.116 \\
1000.000 &3.000 &5.000 &7.000 &9.000 &11.000 &13.000 &15.000 &16.999 &18.997 &20.992 &22.983 &24.970 &26.949 &28.919 &30.877 &32.822 &34.751 &36.660 &38.548 &40.410 \\

\end{tabular}
}
\end{center}
\end{table}

The graph of $P$ as a function of $k$ and $E$ is given in Figure~\ref{fig:Steiner graph}. We note again that in practice one may want to vary the number $N$ of the code vectors. First, it makes sense to take $N=2k+1$ putting precisely one vector to the origin. Then one has to note that the actual amount of the transmitted information multiplied by the probability of correct transmission will be our number $P$ with the factor of $\frac{\log_2 N}{N}$.

\begin{figure}
\caption{Graph of results}
\label{fig:Steiner graph}
\hspace*{-2cm}
    \includegraphics[scale = 0.5]{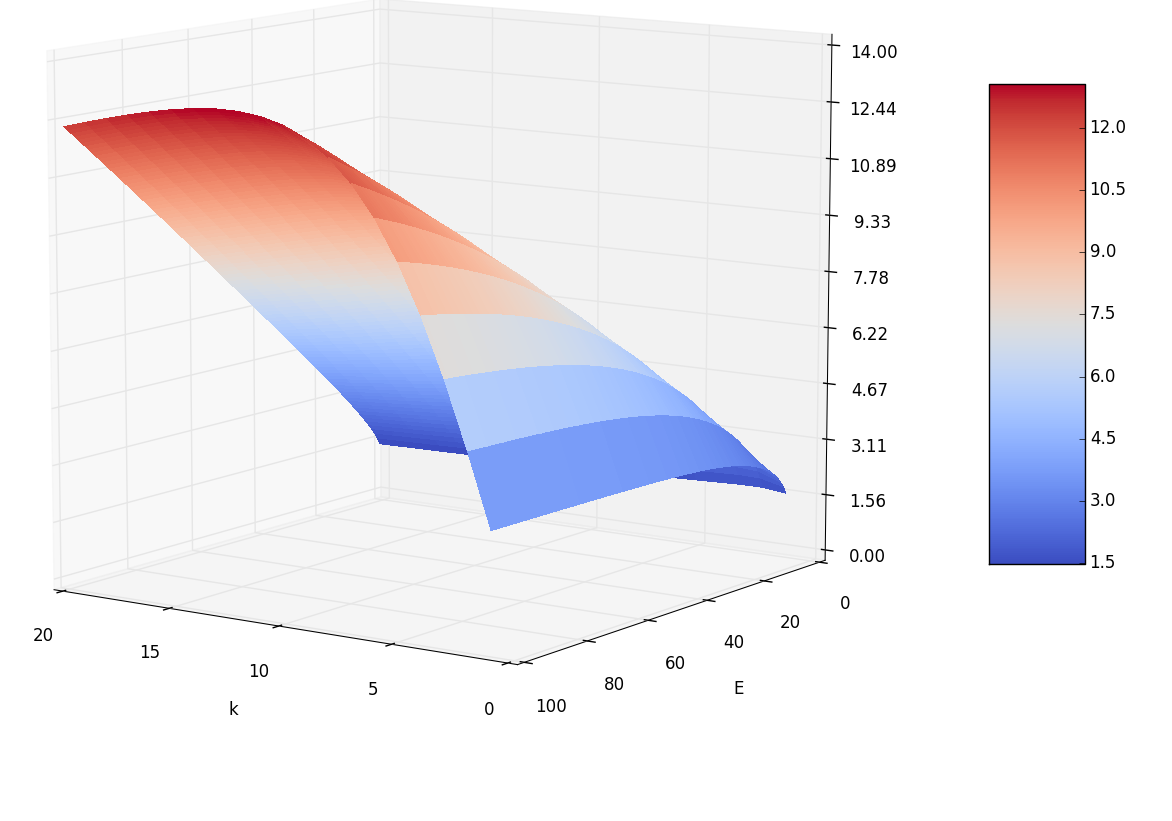}
\end{figure}

\subsection{Formulas for arbitrary orthogonal antipodal configuration}

In the more general case we have $k$ pairs of vectors $\pm v_i$, so that the lengths in $i$th pair are $a_i$. Again, we work in the linear hull of the configuration and consider the vectors of the configuration as proportional to the basis vectors.

Consider the hyperplane of $x_k=0$ and move it along the $k$th basis vector. Let the shifted hyperplane by $\{x_k=t\}$. In the intersection with this hyperplane, the Voronoi regions of the original points are the weighted Voronoi regions of their projections. The weights are $t^2$ for the first $k-1$ pairs, and the last pair is actually represented by one of the points projected to the origin in the hyperplane with weight $(a_{k} - t)^2$. When we subtract $t^2$ from all the weights then $2k-2$ points remain without weights, while the last one gets weight $a_k^2 - 2a_kt$. The latter Voronoi region is a parallelotope:
$$
\prod\limits_1^{k-1} \left[-\frac{a_i^2 + 2a_kt-a_k^2}{2a_i}, \frac{a_i^2 + 2a_kt-a_k^2}{2a_i}\right],
$$
if $a_i^2 + 2a_kt-a_k^2 > 0$ for all $i\neq k$.

In particular, in order to this latter Voronoi region to be nonempty, we need
$$
t > \frac{a_k^2 - \min\limits_{i\neq k}{a_i}^2}{2a_k},\, t > 0,
$$
that is $t > \frac{a_k^2 - \min{a_i}^2}{2a_k}$.

Write down the induced measure of this parallelotope:
$$
\prod\limits_1^{k-1} \left(2\Phi\left(\frac{a_i^2 + 2a_kt-a_k^2}{2a_i}\right) - 1\right),
$$
and integrate in $t$ to obtain the Gaussian measure of one of the points in $k$th pair:
$$
\int\limits_{\frac{a_k^2 - \min{a_i}^2}{2a_k}}^{+\infty} \frac{1}{\sqrt{2\pi}}e^{-\frac{(t - a_k)^2}{2}}\prod\limits_1^{k-1} \left(2\Phi\left(\frac{a_i^2 + 2a_kt-a_k^2}{2a_i}\right) - 1\right)\; dt.
$$

The sum of all such Gaussian measures equals:
$$
P(\pm a_1e_1, \ldots, \pm a_ke_k) = 2\sum\limits_{j = 1}^k\int\limits_{\frac{a_j^2 - \min{a_i}^2}{2a_j}}^{+\infty} \frac{1}{\sqrt{2\pi}}e^{-\frac{(t - a_j)^2}{2}}\prod\limits_{i \neq j} \left(2\Phi\left(\frac{a_i^2 + 2a_jt-a_j^2}{2a_i}\right) - 1\right)\; dt.
$$
This is the function of $(a_1,\ldots,a_k)$ we want to maximize.

\subsection{Numerical experiments for arbitrary orthogonal antipodal configuration}

Let us try to optimize the above function numerically. We are using the standard algorithm of \textit{Basin Hopping}. The results are given in Tables \ref{tab:k=3}, \ref{tab:k=4}, \ref{tab:k=5}, \ref{tab:k=6}.

\begin{table}[h]
\begin{center}
\caption{k = 3}
\label{tab:k=3}
E \begin{tabular}{c|ccc}
1.0&0.498&0.055&0.499\\
2.0&0.577&0.577&0.577\\
3.0&0.707&0.707&0.707\\
4.0&0.816&0.817&0.816\\
5.0&0.913&0.913&0.913\\
6.0&1.000&1.000&1.000\\
8.0&1.155&1.155&1.155\\
10.0&1.291&1.291&1.291\\
20.0&1.826&1.826&1.826\\
\end{tabular}
\end{center}
\end{table}

\begin{table}[h]
\begin{center}
\caption{k = 4}
\label{tab:k=4}
E \begin{tabular}{c|cccc}
2.0&0.578&0.000&0.577&0.577\\
3.0&0.707&0.707&0.026&0.707\\
4.0&0.816&0.816&0.052&0.816\\
5.0&0.791&0.791&0.791&0.791\\
6.0&0.866&0.866&0.866&0.866\\
8.0&1.000&1.000&1.000&1.000\\
10.0&1.118&1.118&1.118&1.118\\
20.0&1.581&1.581&1.581&1.581\\
\end{tabular}
\end{center}
\end{table}

\begin{table}[h]
\begin{center}
\caption{k = 5}
\label{tab:k=5}
E \begin{tabular}{c|ccccc}
1.0&0.001&0.000&0.055&0.498&0.499\\
2.0&0.000&0.577&0.578&0.577&0.000\\
3.0&0.707&0.707&0.000&0.026&0.707\\
4.0&0.816&0.002&0.816&0.816&0.052\\
5.0&0.791&0.010&0.791&0.791&0.791\\
6.0&0.017&0.866&0.866&0.866&0.866\\
8.0&1.000&0.035&1.000&1.000&1.000\\
10.0&0.067&1.118&1.118&1.118&1.118\\
12.0&1.095&1.095&1.095&1.096&1.095\\
14.0&1.183&1.183&1.183&1.183&1.183\\
16.0&1.265&1.265&1.265&1.265&1.265\\
20.0&1.414&1.414&1.414&1.414&1.414\\
\end{tabular}
\end{center}
\end{table}

\begin{table}[h]
\begin{center}
\caption{k = 6}
\label{tab:k=6}
E \begin{tabular}{c|cccccc}
2.0&0.011&0.000&0.577&0.577&0.577&0.000\\
3.0&0.707&0.707&0.000&0.707&0.000&0.026\\
4.0&0.816&0.816&0.052&0.000&0.816&0.000\\
5.0&0.011&0.790&0.789&0.792&0.000&0.791\\
6.0&0.000&0.016&0.866&0.866&0.866&0.866\\
8.0&1.000&1.000&0.035&1.000&1.000&0.000\\
10.0&0.012&1.000&1.000&1.000&1.000&1.000\\
12.0&0.019&1.095&1.096&1.095&1.095&1.095\\
14.0&1.183&1.183&1.183&1.183&0.031&1.183\\
16.0&1.265&1.265&0.048&1.265&1.265&1.265\\
18.0&1.225&1.225&1.225&1.225&1.225&1.225\\
19.0&1.258&1.258&1.258&1.258&1.258&1.258\\
20.0&1.291&1.291&1.291&1.291&1.291&1.291\\
21.0&1.323&1.323&1.323&1.323&1.323&1.323\\

\end{tabular}
\end{center}
\end{table}

It seems that for arbitrary dimension $k$ there exists a threshold of energy $E_0(k)$ such that for energy $E > E_0(k)$ the optimal configuration of $k$ antipodal pairs is the configuration with all equal lengths of the vectors.

\bibliography{../Bib/karasev}
\bibliographystyle{abbrv}
\end{document}